\newcommand{\N}{\mathbb{N}}
\newcommand{\R}{\mathbb{R}}
\newcommand{\C}{\mathbb{C}}
\newcommand{\dx}{{\rm d}x }
\newcommand{\supp}{\operatorname{supp}}
\newcommand{\singsupp}{\operatorname{sing\,supp}\,}
\newtheorem{theorem}{Theorem}[section]
\newtheorem{proposition}[theorem]{Proposition}
\newtheorem{lemma}[theorem]{Lemma}
\newtheorem{corollary}[theorem]{Corollary}
\newtheorem{question}[theorem]{Question}
\theoremstyle{definition}
\newtheorem{example}[theorem]{Example}
\newtheorem{remark}[theorem]{Remark}
\newtheorem{problem}[theorem]{Problem}
\theoremstyle{remark}
\numberwithin{equation}{section}
\DeclareRobustCommand\widecheck[1]{{\mathpalette\@widecheck{#1}}}
\def\@widecheck#1#2{%
    \setbox\z@\hbox{\m@th$#1#2$}%
    \setbox\tw@\hbox{\m@th$#1%
       \widehat{%
          \vrule\@width\z@\@height\ht\z@
          \vrule\@height\z@\@width\wd\z@}$}%
    \dp\tw@-\ht\z@
    \@tempdima\ht\z@ \advance\@tempdima2\ht\tw@ \divide\@tempdima\thr@@
    \setbox\tw@\hbox{%
       \raise\@tempdima\hbox{\scalebox{1}[-1]{\lower\@tempdima\box
\tw@}}}%
    {\ooalign{\box\tw@ \cr \box\z@}}}
\tikzset{join/.code=\tikzset{after node path={%
\ifx\tikzchainprevious\pgfutil@empty\else(\tikzchainprevious)%
edge[every join]#1(\tikzchaincurrent)\fi}}}
\tikzset{>=stealth',every on chain/.append style={join},
         every join/.style={->}}
\tikzstyle{labeled}=[execute at begin node=$\scriptstyle,
\begin{document}
	
\title[Linear topological invariants for kernels of convolution operators]{Linear topological invariants for kernels of convolution and  differential operators}
	
\author[A. Debrouwere]{A. Debrouwere$^1$}
\address{$^1$Department of Mathematics and Data Science, Vrije Universiteit Brussel, Pleinlaan 2, 1050 Brussels, Belgium}
\email{andreas.debrouwere@vub.be}

\author[T.\ Kalmes]{T.\ Kalmes$^2$}
\address{$^2$Faculty of Mathematics, Chemnitz University of Technology, 09107 Chemnitz, Germany}
\email{thomas.kalmes@math.tu-chemnitz.de}

\begin{abstract}
	We establish the condition ($\Omega$) for smooth kernels of various types of convolution and differential operators. By the  (DN)-$(\Omega)$ splitting theorem of Vogt and Wagner, this implies that these operators are surjective on the corresponding spaces of vector-valued smooth functions with values in a product of Montel (DF)-spaces whose strong duals satisfy the condition (DN), e.g., the space $\mathscr{D}'(Y)$ of distributions over an open set $Y \subseteq \R^{n}$ or the space $\mathscr{S}'(\R^{n})$ of tempered distributions. Most notably, we show that:
	\begin{itemize}
		\item[$(i)$] $\mathscr{E}_P(X) =  \{ f \in \mathscr{E}(X) \, | \, P(D)f = 0 \}$  satisfies ($\Omega$) for any differential operator $P(D)$ and any open convex set $X \subseteq \R^d$.
		\item[$(ii)$] Let $P\in\C[\xi_1,\xi_2]$ and $X \subseteq \R^2$ open be such that $P(D):\mathscr{E}(X)\rightarrow\mathscr{E}(X)$ is surjective. Then, $\mathscr{E}_P(X)$ satisfies ($\Omega$).
		\item[$(iii)$] Let $\mu \in \mathscr{E}'(\R^d)$ be such that $ \mathscr{E}(\R^d) \rightarrow \mathscr{E}(\R^d), \, f \mapsto \mu \ast f$  is surjective. Then, $ \{ f \in \mathscr{E}(\R^d) \, | \, \mu \ast f = 0 \}$  satisfies ($\Omega$).
	\end{itemize}
	The central result in this paper is that the space of smooth zero solutions  of a general convolution equation satisfies the condition ($\Omega$) if and only if  the space of distributional zero solutions of the equation satisfies the condition (P$\Omega$). The above and related statements then follow from known results concerning (P$\Omega$)  for distributional kernels of convolution and differential operators \cite{B-D2006, Kalmes12-3,Kalmes19}.

\mbox{}\\
	
	\noindent Keywords: Convolution operators; Differential operators; Linear topological invariants; 
	Surjectivity of convolution and differential operators on spaces of vector-valued smooth functions; Homological algebra methods in functional analysis.\\ 
	
	\noindent MSC 2020: 46A63, 35E20, 46M18
	
\end{abstract}

\maketitle

\section{Introduction}
The aim of this paper is to study certain linear topological invariants for kernels of convolution and differential operators. This topic goes back to the seminal article of Vogt \cite{Vogt1983-2} and has gained a renewed interest by the work of Bonet and Doma\'nski \cite{B-D2006,B-D2008}. As we shall explain later on, this problem is closely connected to and motivated by the question of surjectivity of such operators on spaces of vector-valued smooth functions and distributions: 
\begin{question}\label{question-intro}Let $\mu \in\mathscr{E}'(\R^d)$ and $X_1, X_2 \subseteq \R^d$ be open sets such that 
	\begin{equation}
		\label{def-sets}X_2 - \operatorname{supp} \mu \subseteq X_1.
	\end{equation} 
	Let $E$ be a locally convex space.
	\begin{itemize}
		\item[$(i)$] Suppose that $\mathscr{E}(X_1) \to \mathscr{E}(X_2), \, f \mapsto \mu \ast f$ is surjective. When is the associated map
		\begin{equation}
			\mathscr{E}(X_1;E) \to \mathscr{E}(X_2;E), \, {\mathbf{f}} \mapsto \mu \ast {\mathbf{f}} 
			\label{vv-smooth}
		\end{equation}
		surjective?
		\item[$(ii)$] Suppose that $\mathscr{D}'(X_1) \to \mathscr{D}'(X_2), \, f \mapsto \mu \ast f$ is surjective. When is the associated map
		\begin{equation} 
			\mathscr{D}'(X_1;E) \to \mathscr{D}'(X_2;E),  \, {\mathbf{f}} \mapsto \mu \ast {\mathbf{f}} 
			\label{vv-dist}
		\end{equation}
		surjective?
	\end{itemize}
\end{question}
\noindent If $E$ is a space of functions or distributions, Question \ref{question-intro} is equivalent to the problem of parameter dependence of solutions of convolution equations. For differential equations this problem  has a long and rich tradition, see \cite{B-D2006,B-D2008, Browder, Domanski, Mantlik1, Mantlik2, Mantlik3, Treves1, Treves2}. 

We now explain the connection between Question \ref{question-intro} and linear topological invariants for kernels of convolution operators. We assume that the reader is familiar with the conditions ($\Omega$) and (DN) for Fr\'echet spaces and the condition (P$\Omega$) for (PLS)-spaces. We refer to \cite{B-D2006,M-V} for more information on these conditions and examples of spaces satisfying them. Suppose that $\mathscr{E}(X_1) \to \mathscr{E}(X_2), \, f \mapsto \mu \ast f$ is surjective and denote by $\mathscr{E}_\mu(X_1,X_2)$ the kernel of this map. A result of  Grothendieck \cite{Grothendieck} yields that the map \eqref{vv-smooth} is always surjective if $E$ is a Fr\'echet space.  This is no longer true in general if $E$ is a (DF)-space, as shown by Vogt \cite{Vogt1983-2}.  The splitting theory for Fr\'echet spaces \cite{Vogt1987} implies that for $E =s'$, with $s$ the  space of rapidly decreasing sequences, the map \eqref{vv-smooth} is surjective if and only if $\mathscr{E}_\mu(X_1,X_2)$ satisfies ($\Omega$) (cf. \cite{Vogt1983-2}). Furthermore, if  $\mathscr{E}_\mu(X_1,X_2)$ satisfies ($\Omega$), then the map \eqref{vv-smooth} is surjective  for any $E$ isomorphic to  a product of Montel (DF)-spaces whose strong duals satisfy (DN). Counterparts of these results for distributions were obtained by Bonet and Doma\'nski \cite{B-D2006}. Suppose that $\mathscr{D}'(X_1) \to \mathscr{D}'(X_2), \, f \mapsto \mu \ast f$ is surjective and  denote by $\mathscr{D}'_\mu(X_1,X_2)$ the kernel of this map.
For $E = \mathscr{D}'(\R) \cong \prod_{n \in \N} s'$  the map \eqref{vv-dist} is surjective if and only if $\mathscr{D}'_\mu(X_1,X_2)$ satisfies (P$\Omega$).  Moreover, if $\mathscr{D}'_\mu(X_1,X_2)$ satisfies (P$\Omega$), then the map \eqref{vv-dist} is surjective for any  $E$ isomorphic to a product of  (DFN)-spaces  whose strong duals satisfy (DN).

In view of the above results, it is natural to study when the spaces $\mathscr{E}_\mu(X_1,X_2)$ and $\mathscr{D}'_\mu(X_1,X_2)$ satisfy ($\Omega$) and (P$\Omega$), respectively. This problem has been mainly considered for differential operators. 
Let $P \in \C[\xi_1, \ldots, \xi_d]$ be a polynomial and consider the corresponding  differential operator $P(D)=P(-i\frac{\partial}{\partial x_1},\ldots,-i\frac{\partial}{\partial x_d})$. For an open set $X \subseteq \R^d$  we write $\mathscr{D}'_P(X) = \{ f \in \mathscr{D}'(X) \, | \, P(D)f = 0 \}$ and $\mathscr{E}_P(X) = \mathscr{D}'_P(X) \cap \mathscr{E}(X)$. Suppose  that $P(D):\mathscr{D}'(X)\rightarrow\mathscr{D}'(X)$ is surjective. As mentioned above, $\mathscr{D}'_P(X)$ satisfies (P$\Omega$) if and only if $P(D)   :  \mathscr{D}'(X;\mathscr{D}'(\R)) \to \mathscr{D}'(X;\mathscr{D}'(\R))$ is surjective. Hence, the Schwartz kernel theorem yields that $\mathscr{D}'_P(X)$ satisfies (P$\Omega$) if and only if  $P^+(D) :  \mathscr{D}'(X \times \R) \to \mathscr{D}'(X \times \R)$ is surjective, where $P^+(\xi_1, \ldots, \xi_{d+1}) = P(\xi_1,\ldots, \xi_d)$  \cite{B-D2006}. Combining this characterization  with classical results of H\"ormander about the surjectivity of differential operators  on distribution spaces \cite{HoermanderPDO2} (applied to $P^+(D)$), gives a powerful method to study (P$\Omega$) for $\mathscr{D}'_P(X)$.
For instance, it gives directly that  $\mathscr{D}'_P(X)$ always satisfies (P$\Omega$) if  $X$ is convex \cite{B-D2006} and it may be used to show that  $\mathscr{D}'_P(X)$ satisfies (P$\Omega$) if $P(D)$ is elliptic and $X$ is an arbitrary open set \cite{FrKa, Vogt1983-2}. In \cite{Kalmes12-3, Kalmes19} the second author used this method to  show that $\mathscr{D}'_P(X)$ satisfies (P$\Omega$) whenever $P(D)$ is surjective on $\mathscr{D}'(X)$ in the following cases: $d = 2$; $P(D)$ is semi-elliptic with a single characteristic direction; $P(D)$ acts along a subspace and is elliptic on this subspace; $P(D)$ is a product of first order operators. We remark that in all the above cases also  a geometric characterization of the sets $X$ for which  $P(D):\mathscr{D}'(X)\rightarrow\mathscr{D}'(X)$ is surjective is known \cite{HoermanderPDO2,Kalmes11, Kalmes19}. On the negative side, in \cite{Kalmes12-2} this method was used to provide  a concrete example of a (hypoelliptic)  surjective operator $P(D)$ on $\mathscr{D}'(X)$ such that $\mathscr{D}'_P(X)$ does not satisfy (P$\Omega$).

There does not seem to exist an analogue of the above approach to study ($\Omega$) for $\mathscr{E}_P(X)$. However, if $P(D)$ is hypoelliptic, then $\mathscr{E}_P(X) = \mathscr{D}'_P(X)$ as locally convex spaces. Since  for a (FS)-space the conditions ($\Omega$) and (P$\Omega$) are equivalent, we obtain that  $\mathscr{E}_P(X)$ satisfies ($\Omega$) if and only if $\mathscr{D}'_P(X)$ satisfies (P$\Omega$) in the hypoelliptic case. The above mentioned results concerning (P$\Omega$) for $\mathscr{D}'_P(X)$ may therefore be transferred to ($\Omega$) for $\mathscr{E}_P(X)$ if $P(D)$ is hypoelliptic, e.g., $\mathscr{E}_P(X)$  satisfies ($\Omega$) if  $P(D)$ is hypoelliptic and $X$ is convex or if $P(D)$ is elliptic and $X$ is arbitrary. These results were obtained previously by Petzsche \cite{Petzsche1980} and Vogt \cite{Vogt1983-2}, respectively, via different more direct methods. Note that the example from \cite{Kalmes12-2} also shows that for surjective operators $P(D)$ on $\mathscr{E}(X)$ the smooth kernel $\mathscr{E}_P(X)$ in general does not satisfy ($\Omega$).

The principal goal of this paper is to  establish ($\Omega$) for smooth kernels of (non-hypoelliptic) differential operators and, more generally, convolution operators. Our main tool is the following result, which extends the equivalence of ($\Omega$) for $\mathscr{E}_P(X)$ and (P$\Omega$) for $\mathscr{D}'_P(X)$ in the hypoelliptic case to general convolution operators:
\begin{theorem}\label{main-intro} Let  $\mu \in \mathscr{E}'(\R^d)$ and $X_1, X_2 \subseteq \R^d$ be open sets such that \eqref{def-sets} holds.
	\begin{itemize}
		\item[$(i)$] Suppose that $ \mathscr{E}(X_1) \rightarrow \mathscr{E}(X_2), \, f \mapsto \mu \ast f$  is surjective. If $\mathscr{D}'_\mu (X_1,X_2)$ satisfies (P$\Omega$), then $\mathscr{E}_\mu (X_1,X_2)$ satisfies ($\Omega$).
		\item[$(ii)$] Suppose that  $\mathscr{D}'(X_1) \rightarrow \mathscr{D}'(X_2), \, f \mapsto \mu \ast f$ is surjective. If  $\mathscr{E}_\mu (X_1,X_2)$ satisfies ($\Omega$), then $\mathscr{D}'_\mu (X_1,X_2)$ satisfies (P$\Omega$).
	\end{itemize}
\end{theorem}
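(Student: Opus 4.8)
The plan is to recast both conditions as surjectivity statements for vector-valued convolution operators and then to pass between the smooth and the distributional setting by homological algebra of $(\mathrm{PLS})$-spaces. Recall from the introduction that, since $\mathscr{E}_\mu(X_1,X_2)$ is a nuclear Fr\'echet space, Vogt's splitting theory gives that $\mathscr{E}_\mu(X_1,X_2)$ satisfies $(\Omega)$ if and only if $\mu\ast\colon\mathscr{E}(X_1;s')\to\mathscr{E}(X_2;s')$ is surjective, and that, by the results of Bonet and Doma\'nski, $\mathscr{D}'_\mu(X_1,X_2)$ satisfies $(\mathrm{P}\Omega)$ if and only if $\mu\ast\colon\mathscr{D}'(X_1;\mathscr{D}'(\R))\to\mathscr{D}'(X_2;\mathscr{D}'(\R))$ is surjective; since $\mathscr{D}'(\R)\cong\prod_{n\in\N}s'$ and the vector-valued spaces $\mathscr{E}(X_j;\cdot)$ and $\mathscr{D}'(X_j;\cdot)$ commute with countable products, the latter is in turn equivalent to surjectivity of $\mu\ast\colon\mathscr{D}'(X_1;s')\to\mathscr{D}'(X_2;s')$. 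Hence the theorem amounts to transferring surjectivity of $\mu\ast\otimes\mathrm{id}_{s'}$ between the level $\mathscr{E}(X_j)\widehat\otimes s'$ of smooth functions and the level $\mathscr{D}'(X_j)\widehat\otimes s'$ of distributions, under the respective scalar surjectivity hypotheses.

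For this I would use that both $\mathscr{E}(X_j)\widehat\otimes s'$ and $\mathscr{D}'(X_j)\widehat\otimes s'$ are $(\mathrm{PLS})$-spaces: tensoring a reduced projective spectrum of Banach spaces (coming from $\mathscr{E}(X_j)$), or of $(\mathrm{DFS})$-spaces (coming from $\mathscr{D}'(X_j)$), with the $(\mathrm{DFS})$-space $s'$ again produces a reduced projective spectrum of $(\mathrm{DFS})$-spaces, since a Banach space, or a $(\mathrm{DFS})$-space, tensored with $s'$ is a $(\mathrm{DFS})$-space and tensoring preserves density of the linking maps. The operator $\mu\ast$ induces a morphism between the corresponding spectra whose surjectivity on the projective limits is controlled by $\operatorname{Proj}^1$ of an associated spectrum of kernels; by the Vogt and Bonet--Doma\'nski characterizations the vanishing of this $\operatorname{Proj}^1$ is precisely $(\Omega)$ for $\mathscr{E}_\mu(X_1,X_2)$ on the smooth side and $(\mathrm{P}\Omega)$ for $\mathscr{D}'_\mu(X_1,X_2)$ on the distributional side. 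Moreover, the continuous dense inclusions $\mathscr{E}(X_j)\hookrightarrow\mathscr{D}'(X_j)$ provide a morphism from the smooth kernel spectrum to the distributional one having dense linking maps.

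For direction $(i)$, assuming $(\mathrm{P}\Omega)$ for $\mathscr{D}'_\mu(X_1,X_2)$, that is, the vanishing of $\operatorname{Proj}^1$ of the distributional kernel spectrum, one shows that this forces the vanishing of $\operatorname{Proj}^1$ of the smooth kernel spectrum and hence $(\Omega)$ for $\mathscr{E}_\mu(X_1,X_2)$: the $(\mathrm{P}\Omega)$-estimate allows one to assemble step-wise solutions into a global one, and at each step the passage from a distributional solution to a smooth one is carried out using the surjectivity of $\mu\ast\colon\mathscr{E}(X_1)\to\mathscr{E}(X_2)$, which is also what makes the smooth spectrum the correct one. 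For direction $(ii)$, assuming $(\Omega)$ for $\mathscr{E}_\mu(X_1,X_2)$, that is, surjectivity of $\mu\ast\colon\mathscr{E}(X_1;s')\to\mathscr{E}(X_2;s')$, together with surjectivity of $\mu\ast\colon\mathscr{D}'(X_1)\to\mathscr{D}'(X_2)$, one argues the other way: the scalar distributional surjectivity makes the distributional kernel spectrum reduced, the density of $\mathscr{E}(X_j;s')$ in $\mathscr{D}'(X_j;s')$ combined with the open mapping theorem for $(\mathrm{PLS})$-spaces gives bounded step-wise solvability for distributional data, and a Mittag--Leffler argument then yields surjectivity of $\mu\ast\colon\mathscr{D}'(X_1;s')\to\mathscr{D}'(X_2;s')$, that is, $(\mathrm{P}\Omega)$.

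The main obstacle, and the technical core of the argument, is the interaction between the inductive step structure of $s'$ and the projective structures in the $X_j$-variable --- a manifestation of Grothendieck's \emph{probl\`eme des topologies} --- which prevents one from applying the functor $-\widehat\otimes s'$ to short exact sequences of $(\mathrm{PLS})$-spaces and reading off the conclusion directly. One must instead work with concrete reduced $(\mathrm{DFS})$-valued spectra for $\mathscr{E}(X_j;s')$ and $\mathscr{D}'(X_j;s')$, verify that the linking maps obtained after tensoring with $s'$ keep the properties needed to run the $\operatorname{Proj}^1$ machinery, and match up precisely the smooth and distributional kernel spectra. It is exactly here that the two scalar surjectivity hypotheses are indispensable: they guarantee that the relevant projective spectra are reduced and that the kernel spectra on the two sides agree up to the equivalence detected by $(\Omega)$ and $(\mathrm{P}\Omega)$.
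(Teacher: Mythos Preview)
Your reformulation of the two conditions as surjectivity of $\mu\ast$ on the $s'$-valued spaces is correct and is indeed the motivation behind the theorem. However, the proposal never supplies the concrete mechanism that links smooth and distributional zero-solutions, and the abstract tensor/$\operatorname{Proj}^1$ framework you sketch does not by itself provide one. In your argument for $(i)$ you write that ``the passage from a distributional solution to a smooth one is carried out using the surjectivity of $\mu\ast\colon\mathscr{E}(X_1)\to\mathscr{E}(X_2)$'': but surjectivity of the operator says nothing about approximating elements of its \emph{kernel}. Likewise, the ``dense linking maps'' from the smooth kernel spectrum into the distributional one are precisely the point at issue, not something you may assume; and in $(ii)$ the appeal to density plus an open mapping theorem does not produce the quantitative step-wise bounds required for a Mittag--Leffler argument in a $(\mathrm{PLS})$-space. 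You yourself flag the probl\`eme des topologies as an obstacle and then stop short of saying how it is overcome.

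The paper avoids the tensor-product route entirely and works directly with the $(\Omega)$ and $(\mathrm{P}\Omega)$ inequalities via Lemma~\ref{Omega-step}. The bridge between the two sides is \emph{mollification}: for $f\in\mathscr{D}'_\mu(K_{1,m},K_{2,m})$ the regularization $f\ast\chi_\varepsilon$ lies in $\mathscr{E}_\mu(K_{1,m-1},K_{2,m-1})$ because $\chi_\varepsilon\ast(\mu\ast f)=\mu\ast(\chi_\varepsilon\ast f)$, and one has the quantitative estimates $f-f\ast\chi_\varepsilon\in\varepsilon\sqrt{d}\,B_{m-1,M+1}$ (mean value theorem) and $f\ast\chi_\varepsilon\in C\varepsilon^{-(M+j+d)}U_{m-1,j}$ (differentiating $\chi_\varepsilon$). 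These polynomial gains and losses in $\varepsilon$ are exactly what converts a $(\mathrm{P}\Omega)$-type inclusion into an $(\Omega)$-type one and conversely. The scalar surjectivity hypotheses enter only through Lemma~\ref{surj-proj}, to guarantee $\operatorname{Proj}^1=0$ for the relevant kernel spectra (so that Lemma~\ref{Omega-step} applies and, in $(i)$, so that the distributional spectrum is strongly reduced---the latter again shown via mollification). Without the regularization step your outline is a restatement of the goal rather than a proof.
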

\noindent Theorem \ref{main-intro} should be compared with a classical result of Meise, Taylor and Vogt \cite{MTV} stating that a differential operator $P(D)$ admits a continuous linear right inverse on $\mathscr{E}(X)$,  $X \subseteq \R^d$ open, if and only if it does so on $\mathscr{D}'(X)$. 

The proof of Theorem \ref{main-intro} is given in Section \ref{sect-main} and is based on some abstract results about the derived projective limit functor \cite{Wengenroth} and a simple regularization procedure. The necessary notions and results from the theory of  the derived projective limit functor are discussed in the preliminary Section \ref{sect-prelim}.

By combining Theorem \ref{main-intro} with results concerning (P$\Omega$) for distributional kernels of various types of convolution and differential operators, we show ($\Omega$) for smooth kernels of these operators in Section \ref{sect-examples}. Most notably, we prove that:
\begin{itemize}
	\item[$(i)$] $\mathscr{E}_P(X)$  satisfies ($\Omega$) for any differential operator $P(D)$ and any open convex set $X \subseteq \R^d$ (Theorem \ref{theo: convex}).
	\item[$(ii)$] Let $P\in\C[\xi_1,\xi_2]$ and $X \subseteq \R^2$ open be such that $P(D):\mathscr{E}(X)\rightarrow\mathscr{E}(X)$ is surjective. Then, $\mathscr{E}_P(X)$ satisfies ($\Omega$) (Theorem \ref{theo-main-examples}$(iii)$).
	\item[$(iii)$] Let $\mu \in \mathscr{E}'(\R^d)$ be such that $ \mathscr{E}(\R^d) \rightarrow \mathscr{E}(\R^d), \, f \mapsto \mu \ast f$  is surjective. Then, $\mathscr{E}_\mu(\R^d,\R^d)$  satisfies ($\Omega$) (Theorem \ref{theo: convex-conv}).
	\item[$(iv)$] Let $\mu \in \mathscr{E}'(\R^d)$ belong to the class $\mathscr{R}$ of Berenstein and Dostal \cite{Be-Do1973-1} (see also \cite{Be-Do1973,Dostal}). Let   $X \subseteq \R^d$ be open and convex, and set $X_\mu = \{ x \in \R^d \, | \, x - \supp \mu \subseteq X \}$. Then, $\mathscr{D}'_\mu(X,X_\mu)$  satisfies (P$\Omega$) and $\mathscr{E}_\mu(X_,X_\mu)$  satisfies ($\Omega$) (Theorem \ref{theo:conv-conv}). 
\end{itemize} 
We mention that every distribution with  finite support is of class $\mathscr{R}$ \cite{Be-Do1973-1}. Hence, $(iv)$ particularly applies to difference-differential operators as they are precisely the convolution operators whose kernels have finite support (Corollary \ref{diff-diff}). To the best of our knowledge, $(iv)$ is the first result about linear topological invariants for kernels of convolution operators on convex sets other than the whole space $\R^d$. 

\section{Preliminaries}\label{sect-prelim}
In this preliminary section we recall various notions and results concerning the derived projective limit functor \cite{Wengenroth}, the linear topological invariants ($\Omega$) and (P$\Omega$) \cite{M-V,B-D2006}, and convolution operators \cite{HoermanderPDO2}.

Throughout this article we use standard notation from functional analysis \cite{M-V, Wengenroth} and distribution theory \cite{Schwartz, HoermanderPDO1, HoermanderPDO2}. In particular, given a locally convex space $X$,  we denote by $\mathscr{U}_0(X)$ the filter basis of absolutely convex  neighborhoods of $0$ in $X$ and by $\mathscr{B}(X)$ the family of all absolutely convex bounded sets in $X$.

\subsection{Projective spectra}
A \emph{projective spectrum}  $\mathscr{X} = (X_n, \varrho_{n+1}^n)_{n \in \N}$ consists of vector spaces $X_n$  and  linear 
maps $\varrho^n_{n+1}: X_{n+1} \rightarrow X_n$, called the \emph{spectral maps}. We define $\varrho^n_n  = \operatorname{id}_{X_n}$ and $\varrho^n_{m} = \varrho_{n+1}^n \circ \cdots \circ \varrho^{m-1}_{m} : X_m \rightarrow X_n$ for $n,m \in \N$ with $m > n$.  The \emph{projective limit} of $\mathscr{X}$ is defined as
\[\operatorname{Proj} \mathscr{X} = \left\{(x_n)_{n \in \N} \in \prod_{n \in \N} X_n \, | \,  x_n = \varrho^n_{n+1}(x_{n+1}),  \forall n \in \N\right\}.\]
For each $n \in \N$ we  write $\varrho^n : \operatorname{Proj} \mathscr{X} \rightarrow X_n, \, (x_j)_{j \in \N} \mapsto x_n$. 

Given two projective spectra $\mathscr{X} = (X_n, \varrho_{n+1}^n)_{n \in \N}$  and $\mathscr{Y} = (Y_n, \sigma_{n+1}^n)_{n \in \N}$, a \emph{morphism} $S= (S_n)_{n \in \N}: \mathscr{X} \rightarrow \mathscr{Y}
$ consists of  linear maps $S_n : X_n \rightarrow Y_n$ such that $S_n \circ \varrho_{n+1}^n = \sigma_{n+1}^n \circ S_{n+1}$ for all $n \in \N$. We define $\ker S$ as the projective spectrum  $( \ker S_n, \varrho^n_{n+1 | \ker S_{n+1}})_{n \in \N}$ and
\[\operatorname{Proj} S : \operatorname{Proj} \mathscr{X} \rightarrow \operatorname{Proj} \mathscr{Y}, \, (x_n)_{n \in \N} \mapsto (S_n(x_n))_{n \in \N}.\]

The \emph{derived projective limit} of a projective spectrum $\mathscr{X} = (X_n, \varrho_{n+1}^n)_{n \in \N}$ is defined as
\[\operatorname{Proj}^1 \mathscr{X} =   \prod_{n \in \N}X_n / B(\mathscr{X}),\]
where
\begin{equation*}
	\begin{gathered}
		B(\mathscr{X}) = \{(x_n)_{n \in \N} \in \prod_{n \in \N} X_n \, | \, \exists \, (u_n)_{n \in \N} \in \prod_{n \in \N} X_n \, : \\  x_n = u_n - \varrho^n_{n+1}(u_{n+1}), \forall n \in \N\}.
	\end{gathered}
\end{equation*}
We shall use the following fundamental property of the derived projective limit.
\begin{proposition} \label{proj1surj}\cite[Theorem 3.1.8]{Wengenroth} 
	Let $\mathscr{X} = (X_n, \varrho_{n+1}^n)_{n \in \N}$ and $\mathscr{Y} = (Y_n, \sigma_{n+1}^n)_{n \in \N}$ be projective spectra and let $S = (S_n)_{n \in \N}: \mathscr{X} \rightarrow \mathscr{Y}$ be a morphism. Suppose that for all $n \in \N$ there is $m \geq n$ such that $\sigma^n_m(Y_m) \subseteq S_n(X_n)$. Then, there is an exact sequence of linear maps
	\begin{center}
		\begin{tikzpicture}
			\matrix (m) [matrix of math nodes, row sep=2em, column sep=1.2em]
			{0 & \operatorname{Proj} \ker S & \operatorname{Proj}\mathscr{X} & \operatorname{Proj}\mathscr{Y} & \operatorname{Proj}^1 \ker S & \operatorname{Proj}^1 \mathscr{X}. \\
			};
			{ [start chain] \chainin (m-1-1);
				\chainin (m-1-2);
				\chainin (m-1-3) [join={node[above,labeled] {}}];
				\chainin (m-1-4)[join={node[above,labeled] {\operatorname{Proj} S }}];
				\chainin (m-1-5)[join={node[above,labeled] {}}];
				\chainin (m-1-6)[join={node[above,labeled] {}}];
			}
			
		\end{tikzpicture}
	\end{center}
	Consequently, $\operatorname{Proj}^1 \ker S = 0$ implies that $\operatorname{Proj} S : \operatorname{Proj}\mathscr{X} \rightarrow \operatorname{Proj}\mathscr{Y}$ is surjective. If  $\operatorname{Proj}^1\mathscr{X}= 0$, the converse holds true as well. 
\end{proposition}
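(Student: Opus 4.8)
The plan is to realise $\operatorname{Proj}$ and $\operatorname{Proj}^1$ as the cohomology of a two-term complex and then feed a suitable short exact sequence of spectra into the associated long exact cohomology sequence. For a projective spectrum $\mathscr{X} = (X_n, \varrho^n_{n+1})_{n \in \N}$ I introduce the linear map
\[
\Psi_{\mathscr{X}} : \prod_{n \in \N} X_n \longrightarrow \prod_{n \in \N} X_n, \qquad (x_n)_{n \in \N} \longmapsto \big(x_n - \varrho^n_{n+1}(x_{n+1})\big)_{n \in \N}.
\]
Directly from the definitions, $\ker \Psi_{\mathscr{X}} = \operatorname{Proj}\mathscr{X}$ and $\operatorname{coker}\Psi_{\mathscr{X}} = \operatorname{Proj}^1\mathscr{X}$; thus the complex $\big[\,\prod_n X_n \xrightarrow{\Psi_{\mathscr{X}}} \prod_n X_n\,\big]$, placed in degrees $0$ and $1$, has cohomology $\operatorname{Proj}\mathscr{X}$ in degree $0$ and $\operatorname{Proj}^1\mathscr{X}$ in degree $1$, and a morphism of spectra induces, via the product maps, a morphism of such complexes (this uses exactly the compatibility $S_n \circ \varrho^n_{n+1} = \sigma^n_{n+1} \circ S_{n+1}$).

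Next I pass to the image spectrum. Set $Z_n := S_n(X_n)$ and $\tau^n_{n+1} := \sigma^n_{n+1}|_{Z_{n+1}}$; this is well defined because $\sigma^n_{n+1}(S_{n+1}(X_{n+1})) = S_n(\varrho^n_{n+1}(X_{n+1})) \subseteq S_n(X_n)$, so $\mathscr{Z} := (Z_n, \tau^n_{n+1})_{n \in \N}$ is a projective spectrum. Then $0 \to \ker S \to \mathscr{X} \xrightarrow{S} \mathscr{Z} \to 0$ is a termwise exact sequence of spectra, and since arbitrary products of exact sequences of vector spaces are exact, applying the $\Psi$-construction produces a short exact sequence of two-term complexes. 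Its long exact cohomology sequence reads
\[
0 \to \operatorname{Proj}\ker S \to \operatorname{Proj}\mathscr{X} \to \operatorname{Proj}\mathscr{Z} \xrightarrow{\ \delta\ } \operatorname{Proj}^1\ker S \to \operatorname{Proj}^1\mathscr{X} \to \operatorname{Proj}^1\mathscr{Z} \to 0.
\]

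Now the hypothesis enters, and only here: given $(y_n)_{n} \in \operatorname{Proj}\mathscr{Y}$, for each $n$ choose $m \geq n$ with $\sigma^n_m(Y_m) \subseteq S_n(X_n) = Z_n$; then $y_n = \sigma^n_m(y_m) \in Z_n$, and since the spectral maps of $\mathscr{Z}$ are restrictions of those of $\mathscr{Y}$ this shows $\operatorname{Proj}\mathscr{Z} = \operatorname{Proj}\mathscr{Y}$, under which identification $\operatorname{Proj}\mathscr{X} \to \operatorname{Proj}\mathscr{Z}$ becomes $\operatorname{Proj} S$. Truncating the long exact sequence after $\operatorname{Proj}^1\mathscr{X}$ gives precisely the asserted five-term exact sequence. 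The two consequences are then elementary diagram chases: if $\operatorname{Proj}^1\ker S = 0$ then $\delta = 0$, so exactness at $\operatorname{Proj}\mathscr{Y}$ forces $\operatorname{Proj} S$ to be surjective; conversely, if $\operatorname{Proj}^1\mathscr{X} = 0$ and $\operatorname{Proj} S$ is surjective, then $\delta = 0$ by exactness at $\operatorname{Proj}\mathscr{Y}$, while $\operatorname{Proj}^1\ker S \to \operatorname{Proj}^1\mathscr{X}$ vanishes, so exactness at $\operatorname{Proj}^1\ker S$ yields $\operatorname{Proj}^1\ker S = \operatorname{im}\delta = 0$.

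I expect the only steps needing genuine care to be the verification that the $\Psi$-construction carries a termwise short exact sequence of spectra to a short exact sequence of complexes (this is exactly where exactness of infinite products is used) and, relatedly, the bookkeeping that identifies the induced map on $H^0$ with $\operatorname{Proj} S$ after the substitution $\operatorname{Proj}\mathscr{Z} = \operatorname{Proj}\mathscr{Y}$; the long exact cohomology sequence of a short exact sequence of complexes, and the concluding diagram chases, are routine.
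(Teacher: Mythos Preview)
The paper does not supply its own proof of this proposition; it is quoted from Wengenroth's monograph as a background tool. Your argument is correct and is essentially the standard one found there: realise $\operatorname{Proj}$ and $\operatorname{Proj}^1$ as the kernel and cokernel of the shift map $\Psi_{\mathscr{X}}$ on $\prod_n X_n$, pass to the image spectrum $\mathscr{Z}$ to obtain a levelwise short exact sequence $0 \to \ker S \to \mathscr{X} \to \mathscr{Z} \to 0$, apply the long exact cohomology sequence of the resulting short exact sequence of two-term complexes, and then use the hypothesis to identify $\operatorname{Proj}\mathscr{Z}$ with $\operatorname{Proj}\mathscr{Y}$. The concluding diagram chases are also handled correctly.
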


By a \emph{projective spectrum of locally convex spaces} we mean a projective spectrum  $\mathscr{X} = (X_n, \varrho_{n+1}^n)_{n \in \N}$ consisting of locally convex spaces $X_n$  and continuous linear spectral maps $\varrho^n_{n+1}$.
In such a case, we endow $\operatorname{Proj} \mathscr{X}$ with its natural projective limit topology. 

For projective spectra of Fr\'echet spaces  the vanishing of the derived projective limit may be characterized as follows (this result is known as the Mittag-Leffler lemma).

\begin{theorem}\label{M-L} \cite[Theorem 3.2.8]{Wengenroth} Let $\mathscr{X} = (X_n, \varrho_{n+1}^n)_{n \in \N}$ be a projective spectrum of Fr\'echet spaces. Then, the following statements are equivalent:
	\begin{itemize}
		\item[$(i)$] $\operatorname{Proj}^1 \mathscr{X} = 0$.
		\item[$(ii)$] $\forall n \in \N, \, U \in \mathscr{U}_0(X_n) \, \exists m \geq n \, \forall k \geq m \, : \, \varrho^n_m(X_m) \subseteq \varrho^n_k(X_k) + U$.
		\item[$(iii)$] $\forall n \in \N, \, U \in \mathscr{U}_0(X_n) \, \exists m \geq n  \, : \, \varrho^n_m(X_m) \subseteq \varrho^n(\operatorname{Proj} \mathscr{X}) + U$. 
	\end{itemize} 
\end{theorem}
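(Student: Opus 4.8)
The plan is to establish the cycle $(iii)\Rightarrow(ii)$, $(ii)\Rightarrow(iii)$, $(ii)\Rightarrow(i)$, $(i)\Rightarrow(ii)$; this is the classical Mittag-Leffler lemma, and apart from the trivial first implication every step is a successive-approximation argument that uses crucially that each $X_n$ is metrizable and complete. The implication $(iii)\Rightarrow(ii)$ needs only the observation that $\varrho^n(\operatorname{Proj}\mathscr{X})=\varrho^n_k\bigl(\varrho^k(\operatorname{Proj}\mathscr{X})\bigr)\subseteq\varrho^n_k(X_k)$ for every $k\ge n$, so $\varrho^n_m(X_m)\subseteq\varrho^n(\operatorname{Proj}\mathscr{X})+U$ already gives $\varrho^n_m(X_m)\subseteq\varrho^n_k(X_k)+U$ for all $k\ge m$. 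It is convenient to record at the start that, for \emph{any} projective spectrum of locally convex spaces, $B(\mathscr{X})$ is dense in the Fr\'echet space $\prod_nX_n$: given $(y_n)_n$, the finite telescoping choice $u_k:=\sum_{j=k}^r\varrho^k_j(y_j)$ for $k\le r$ and $u_k:=0$ for $k>r$ solves $u_k-\varrho^k_{k+1}(u_{k+1})=y_k$ for all $k\le r$, and $r$ can be taken arbitrarily large. Since $\operatorname{Proj}^1\mathscr{X}=\prod_nX_n/B(\mathscr{X})$, condition $(i)$ is therefore equivalent to $B(\mathscr{X})$ being closed in $\prod_nX_n$, equivalently to the surjectivity of the continuous linear ``coboundary'' map $d\colon\prod_nX_n\to\prod_nX_n$, $(v_k)_k\mapsto(v_k-\varrho^k_{k+1}(v_{k+1}))_k$, whose kernel is precisely $\operatorname{Proj}\mathscr{X}$.

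For $(ii)\Rightarrow(iii)$, fix $n$ and $U\in\mathscr{U}_0(X_n)$. I would first fix a decreasing fundamental sequence $(V_j)_{j\ge0}$ of absolutely convex $0$-neighborhoods of $X_n$ with $\overline{V_0}\subseteq U$, $V_{j+1}+V_{j+1}\subseteq V_j$, and $V_j\to0$, and then, applying $(ii)$ at successive levels together with the continuity of the spectral maps, build an increasing sequence of indices $n=n_0<n_1<n_2<\cdots$ and $0$-neighborhoods $W_j\in\mathscr{U}_0(X_{n_j})$ such that $\varrho^{n_j}_{n_{j+1}}(X_{n_{j+1}})\subseteq\varrho^{n_j}_k(X_k)+W_j$ for all $k\ge n_{j+1}$, the $W_j$ being chosen small enough that their images under $\varrho^{n_0}_{n_j}$ and $\varrho^{n_j}_{n_{j+1}}$ are suitably controlled. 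Given $x\in\varrho^{n_0}_{n_1}(X_{n_1})$, one then runs a diagonal recursion: at each level one uses $(ii)$ to rewrite the current approximant (which sits in some image $\varrho^{n_j}_{n_{j+1}}(X_{n_{j+1}})$) as the pushforward of an element coming from the next level up, modulo a $W_j$-error, and one carries along the corresponding lifts; the summability of these errors in the respective Fr\'echet metrics makes, for every level $n_j$, the sequence of relevant lifts Cauchy, hence convergent by completeness, and the limits form a coherent family $z\in\operatorname{Proj}\mathscr{X}$ with $\varrho^n(z)-x\in\overline{V_0}\subseteq U$. This gives $(iii)$ with $m:=n_1$.

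For $(ii)\Rightarrow(i)$ I would show that $B(\mathscr{X})$ is closed. If $y\in\overline{B(\mathscr{X})}$, choose $u^{(i)}\in\prod_nX_n$ with $d(u^{(i)})\to y$; then $d(u^{(i)}-u^{(i+1)})\to0$, and since $\ker d=\operatorname{Proj}\mathscr{X}$ one can, using $(ii)$ level by level together with completeness, correct each $u^{(i)}$ by an element of $\operatorname{Proj}\mathscr{X}$ so that the corrected sequence becomes Cauchy in $\prod_nX_n$; its limit $u$ satisfies $d(u)=y$, whence $y\in B(\mathscr{X})$. For $(i)\Rightarrow(ii)$ I would argue directly: if $\operatorname{Proj}^1\mathscr{X}=0$ then $d$ is onto, hence open by the open mapping theorem for Fr\'echet spaces. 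Given $n$ and $U\in\mathscr{U}_0(X_n)$, openness applied to the $0$-neighborhood $V:=\{(v_k)_k\,:\,v_n\in U\}$ yields $r\ge n$ such that every $(y_k)_k$ vanishing for indices $\le r$ lies in $d(V)$. Set $m:=r+1$; for arbitrary $k\ge m+1$ and arbitrary $a\in X_m$, the element $y$ with $y_m:=a$ and $y_j:=0$ for $j\ne m$ lies in $d(V)$, so there is $v$ with $v_n\in U$, with $v_j=\varrho^j_{j+1}(v_{j+1})$ for $n\le j<m$ and for $j\ge m+1$, and with $v_m-\varrho^m_{m+1}(v_{m+1})=a$. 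Telescoping the first chain gives $\varrho^n_m(v_m)=v_n\in U$ and telescoping the second gives $\varrho^n_{m+1}(v_{m+1})=\varrho^n_k(v_k)$, so $\varrho^n_m(a)=\varrho^n_m(v_m)-\varrho^n_{m+1}(v_{m+1})\in U+\varrho^n_k(X_k)=\varrho^n_k(X_k)+U$; since $a$ and $k\ge m+1$ were arbitrary (the case $k=m$ being trivial), $(ii)$ follows.

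The step I expect to be the main obstacle is $(ii)\Rightarrow(iii)$: the bookkeeping of the diagonal recursion — matching the index selection forced by $(ii)$ with the neighborhood bases at all the levels involved and with the summability needed for the lifts to converge — is delicate, and the genuine difficulty is that the subspaces $\varrho^n_k(X_k)$ need not be closed, so one really must build a coherent sequence rather than merely approximate in $X_n$; this is the point at which metrizability and completeness of \emph{all} the $X_k$ are essential, and it is why the lemma fails for general non-metrizable spectra. I note finally that one can lighten part of the argument with Proposition \ref{proj1surj}: applied to the morphism from the cofinal subspectrum $(X_k,\varrho^k_{k+1})_{k\ge n}$ onto the decreasing chain of subspaces $\bigl(\varrho^n_k(X_k)\bigr)_{k\ge n}$ of $X_n$ (with inclusions as spectral maps and $S_k=\varrho^n_k$, so that its hypothesis holds automatically), it shows that $\operatorname{Proj}^1\mathscr{X}=0$ forces $\varrho^n(\operatorname{Proj}\mathscr{X})=\bigcap_{k\ge n}\varrho^n_k(X_k)$ for every $n$, although the passage from there to the neighborhood-wise statements still requires the approximation machinery above.
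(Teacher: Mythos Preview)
The paper does not prove this statement: Theorem \ref{M-L} is quoted verbatim from \cite[Theorem 3.2.8]{Wengenroth} and used as a black box, so there is no in-paper argument to compare against. Your outline is the standard one and matches what one finds in Wengenroth's book: $(iii)\Rightarrow(ii)$ is trivial, $(ii)\Rightarrow(iii)$ is the genuine Mittag-Leffler recursion (you correctly flag it as the crux), and your $(i)\Rightarrow(ii)$ via the open mapping theorem for the coboundary $d$ on the Fr\'echet space $\prod_n X_n$ is clean and correct as written.

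The step that is not convincing is $(ii)\Rightarrow(i)$. Your plan is to show $B(\mathscr{X})$ closed by taking $u^{(i)}$ with $d(u^{(i)})\to y$ and then ``correcting each $u^{(i)}$ by an element of $\operatorname{Proj}\mathscr{X}$'' so that the corrected sequence is Cauchy. But the assertion that $d(w)$ small forces $w$ close to $\ker d=\operatorname{Proj}\mathscr{X}$ is precisely openness of $d$, which is equivalent to the surjectivity you are trying to establish; neither $(ii)$ nor $(iii)$ supplies such global corrections, since $(iii)$ only says that the \emph{images} $\varrho^n_m(X_m)$ are close to $\varrho^n(\operatorname{Proj}\mathscr{X})$ coordinate by coordinate, not that an almost-coherent family in $\prod_n X_n$ is close to a coherent one. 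The standard proof of $(ii)\Rightarrow(i)$ avoids this by constructing a preimage of a given $(y_n)_n$ directly: take the telescoping partial solutions $u^{(r)}$ from your density remark, note that the differences $u^{(r+1)}-u^{(r)}$ are coherent up to level $r$ so their low coordinates lie in $\varrho^n_{r+1}(X_{r+1})$, and then use $(ii)$ recursively to modify the \emph{tails} of the $u^{(r)}$ (not to add projective-limit elements) so that the sequence becomes Cauchy in $\prod_n X_n$ while still solving $d(u^{(r)})_n=y_n$ for $n\le r$. This is the same bookkeeping you describe for $(ii)\Rightarrow(iii)$, run a second time; your closedness formulation hides rather than bypasses it.
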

For  projective spectra of (DFS)-spaces the following characterization holds.
\begin{theorem}\label{M-L-DFS} \cite[Theorem 3.2.18]{Wengenroth} Let $\mathscr{X} = (X_n, \varrho_{n+1}^n)_{n \in \N}$ be a projective spectrum of (DFS)-spaces. Then, $\operatorname{Proj}^1 \mathscr{X} = 0$ if and only if
	\[\begin{gathered}
		\forall n \in \N \, \exists m \geq n \, \forall k \geq m \, \exists B \in \mathscr{B}(X_n) \, \forall M \in \mathscr{B}(X_m) \, \exists K \in \mathscr{B}(X_k) \,  : \\
		\varrho^n_m(M) \subseteq  B +  \varrho^n_k(K).
	\end{gathered}\]
\end{theorem}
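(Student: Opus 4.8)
The natural starting point is the homological meaning of $\operatorname{Proj}^1\mathscr{X}=0$: it says precisely that the connecting map $\psi\colon \prod_n X_n\to\prod_n X_n$, $(u_n)_n\mapsto(u_n-\varrho^n_{n+1}(u_{n+1}))_n$, is surjective, i.e. every $(x_n)_n\in\prod_n X_n$ can be written as $x_n=u_n-\varrho^n_{n+1}(u_{n+1})$ for all $n$. Since each $X_n$ is a (DFS)-space, it is a reduced inductive limit $X_n=\bigcup_j X_{n,j}$ of Banach spaces with compact linking maps; after passing to subsequences and relabelling in the usual way one may arrange that $\varrho^n_{n+1}(X_{n+1,j})\subseteq X_{n,j}$ for all $j$, with each induced map $X_{n+1,j}\to X_{n,j}$ continuous and factoring through a compact step inclusion, and that the closed unit balls $B_{n,j}$ of the steps form a fundamental system of bounded sets of $X_n$. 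The whole problem is thereby transported to a doubly-indexed grid of Banach spaces, and the displayed condition becomes a Mittag--Leffler-type quantitative statement about this grid, of the same flavour as condition $(ii)$ in Theorem~\ref{M-L} but now phrased with bounded sets rather than $0$-neighbourhoods (the $0$-neighbourhood filter of a (DFS)-space being controlled, via polars, precisely by the bounded sets of its Fr\'echet predual).

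\textbf{Sufficiency.} Assuming the condition, I would, given $(x_n)_n$, produce a solution of $x_n=u_n-\varrho^n_{n+1}(u_{n+1})$ by the familiar telescoping ansatz $u_n=\sum_{k\ge n}\varrho^n_k(x_k)$, the only issue being convergence of this series in the (DFS)-space $X_n$. Using the condition repeatedly along a suitably chosen increasing sequence of levels, one splits each term $\varrho^n_k(x_k)$ into a ``good'' part lying in a \emph{fixed} Banach ball $B\in\mathscr{B}(X_n)$ (so that, after the standard rescaling trick, the good parts form an absolutely convergent series inside that Banach step) and a ``bad'' part of the form $\varrho^n_{k'}(\,\cdot\,)$ for a still larger level $k'$, which is then re-injected at level $k'$ and improved at the next stage. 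Iterating this ``split and push forward'' scheme and summing along the diagonal yields a convergent series, hence the desired $(u_n)_n$; this is where the quantifier pattern $\forall n\,\exists m\,\forall k$ enters exactly as written.

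\textbf{Necessity.} This is the harder direction. If the condition fails at some $n$, then for every $m\ge n$ there is $k=k(m)\ge m$ such that for every $B\in\mathscr{B}(X_n)$ there is $M\in\mathscr{B}(X_m)$ whose image $\varrho^n_m(M)$ is not contained in $B+\varrho^n_k(K)$ for any $K\in\mathscr{B}(X_k)$. From this data I would, along an increasing sequence of levels, select elements witnessing the failure and assemble them into a single $(x_n)_n\in\prod_n X_n$ admitting no preimage under $\psi$, contradicting $\operatorname{Proj}^1\mathscr{X}=0$. The construction is a diagonalisation: the witnesses must be chosen so that their contributions cannot be cancelled by any candidate $(u_n)_n$, and here the (DFS)-structure is essential, since bounded sets are relatively compact in a step, so a ``bad'' element can be arranged to escape every bounded set of $X_n$ through the spectral maps. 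The main obstacle is precisely the uniformity built into the statement, namely that a \emph{single} $m$ must work simultaneously for all larger $k$: extracting this from the mere vanishing of $\operatorname{Proj}^1$ requires a compactness/diagonal argument exploiting the Schwartz (compactness) property of the linking maps between steps, without which one only recovers a weaker, non-uniform condition.

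\textbf{Remark.} An alternative and perhaps cleaner route is dualisation: the strong duals $X_n'$ are (FS)-spaces and the grid of Banach steps dualises to a grid of Banach spaces on which the range and neighbourhood inclusions become polar (bipolar) statements; one then reduces to the Mittag--Leffler lemma for Fr\'echet spectra (Theorem~\ref{M-L}) applied to suitable auxiliary spectra built from the dual grid, and reads off the stated bounded-set formulation by taking polars back. In either approach, Proposition~\ref{proj1surj} and Theorem~\ref{M-L} are the results actually fed into the argument.
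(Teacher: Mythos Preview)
The paper does not prove this theorem at all: it is quoted verbatim from Wengenroth's monograph as \cite[Theorem 3.2.18]{Wengenroth}, with no argument given. So there is no ``paper's own proof'' to compare against.

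As for your sketch on its own merits: the overall architecture is right and matches Wengenroth's treatment. Sufficiency really does go by the split-and-push-forward scheme you describe (in Wengenroth's language this is the proof that the bounded-set condition implies the spectrum is ``weakly reduced'' and satisfies a Mittag--Leffler-type interpolation, hence $\operatorname{Proj}^1=0$). Necessity is indeed the harder direction, and you have correctly located the difficulty: extracting a \emph{single} $m$ that works for all larger $k$ from the mere surjectivity of $\psi$. In Wengenroth this is handled not by a direct witness construction but by first proving that $\operatorname{Proj}^1\mathscr{X}=0$ implies the spectrum satisfies Retakh's condition $(M^\ast)$, and then showing that for (LS)-spectra $(M^\ast)$ is equivalent to the displayed bounded-set condition; the compactness of the linking maps is used exactly where you say, to upgrade from a pointwise to a uniform statement. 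Your dualisation remark is a legitimate alternative but needs more care than you suggest: $\operatorname{Proj}^1$ of a spectrum is not directly a statement about the dual spectrum, so one has to pass through the equivalence of $\operatorname{Proj}^1=0$ with the topological exactness of the associated short sequence and then dualise that; Wengenroth does touch on this route as well.

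In short: your plan is sound and close to the standard proof, but what you have written is an outline rather than a proof; the genuine work---the compactness argument that yields the uniform $m$ in the necessity direction---is asserted rather than carried out.
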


\subsection{The condition ($\Omega$) for Fr\'echet spaces}
A Fr\'echet space $X$ is said to satisfy the condition ($\Omega$) \cite{M-V} if
\[\begin{gathered}
	\forall U \in \mathscr{U}_0(X) \, \exists V \in \mathscr{U}_0(X) \, \forall W \in \mathscr{U}_0(X) \, \exists C,s > 0 \, \forall \varepsilon \in (0,1) \, : \\ V \subseteq \varepsilon U + \frac{C}{\varepsilon^s} W.
\end{gathered}\]
The following result concerning ($\Omega$) for projective limits of spectra of Fr\'echet spaces is sometimes useful for verifying this condition in concrete situations. For us it will play a vital role in the next section.  We believe this result is essentially known, but  we include a proof here  as we could not find one in the literature.

\begin{lemma}\label{Omega-step}
	Let $\mathscr{X} = (X_n, \varrho^n_{n+1})_{n \in \N}$ be a projective spectrum of Fr\'echet spaces. Then,  $\operatorname{Proj} \mathscr{X}$ satisfies ($\Omega$) and $\operatorname{Proj}^1 \mathscr{X}  =0$ if and only if
	\begin{eqnarray}\label{Omega-step-form}
		\begin{gathered}
			\forall n \in \N, U \in \mathscr{U}_0(X_n) \, \exists m \geq n, V \in \mathscr{U}_0(X_m) \, \forall k \geq m, W \in \mathscr{U}_0(X_k) \\ \exists C,s > 0 \, \forall \varepsilon \in (0,1)\, : \, 
			\varrho^n_m(V) \subseteq \varepsilon U + \frac{C}{\varepsilon^s} \varrho^n_k(W).
		\end{gathered}		
	\end{eqnarray}
\end{lemma}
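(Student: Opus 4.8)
\textbf{Proof strategy for Lemma \ref{Omega-step}.}

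The plan is to prove both implications by carefully unwinding the definitions of ($\Omega$) for a Fréchet space and of $\operatorname{Proj}^1\mathscr{X}=0$, using the Mittag-Leffler characterization from Theorem \ref{M-L} to handle the latter. For the ``if'' direction I would assume \eqref{Omega-step-form} and first recover $\operatorname{Proj}^1\mathscr{X}=0$ by dropping the $\varepsilon$-scaling: fixing $\varepsilon = 1/2$ (say), \eqref{Omega-step-form} gives $\varrho^n_m(V)\subseteq \frac12 U + 2^s C\,\varrho^n_k(W)$ for every $k\ge m$; since $V\in\mathscr{U}_0(X_m)$ this in particular yields $\varrho^n_m(X_m)\subseteq \varrho^n_k(X_k) + U$ for all $k\ge m$ (absorbing constants and using that the right side is a linear subspace plus a neighbourhood), which is condition $(ii)$ of Theorem \ref{M-L}. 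Hence $\operatorname{Proj}^1\mathscr{X}=0$, and by Theorem \ref{M-L}$(iii)$ we may replace $\varrho^n_k(X_k)$ by $\varrho^n(\operatorname{Proj}\mathscr{X})$ up to an arbitrarily small neighbourhood — this is the mechanism that will convert the step-wise estimate \eqref{Omega-step-form} into a genuine ($\Omega$)-estimate on $X:=\operatorname{Proj}\mathscr{X}$. Concretely, given a basic neighbourhood $U_X = (\varrho^n)^{-1}(U)$ of $0$ in $X$, apply \eqref{Omega-step-form} to get $m$ and $V$; I would then show $V_X := (\varrho^m)^{-1}(V')$ works, where $V'$ is a slightly shrunken version of $V$, and for an arbitrary $W_X = (\varrho^k)^{-1}(W)$ (WLOG $k\ge m$, $U$) one uses \eqref{Omega-step-form} together with the approximation $\varrho^n_k(W)\subseteq \varrho^n(\operatorname{Proj}\mathscr{X}) + (\text{small})$ and a lifting argument: an element of $\varrho^n_m(V)$ that is $\varepsilon$-close to $U$ plus $\frac{C}{\varepsilon^s}\varrho^n_k(W)$ must be lifted back to $\operatorname{Proj}\mathscr{X}$, and the error terms must be reabsorbed. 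The bookkeeping with the extra $\varepsilon$-dependent small neighbourhoods is where one has to be a little careful, but Theorem \ref{M-L} is designed exactly for this.

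For the ``only if'' direction, assume $X=\operatorname{Proj}\mathscr{X}$ satisfies ($\Omega$) and $\operatorname{Proj}^1\mathscr{X}=0$. Given $n$ and $U\in\mathscr{U}_0(X_n)$, I would first use $\operatorname{Proj}^1\mathscr{X}=0$ via Theorem \ref{M-L}$(iii)$ to find $m_0\ge n$ with $\varrho^n_{m_0}(X_{m_0})\subseteq \varrho^n(X) + U_1$ for a chosen small $U_1\subseteq U$; the point is that $\varrho^n_{m_0}(X_{m_0})$ is then essentially contained in the image of $X$. Next apply ($\Omega$) for $X$ to the neighbourhood $(\varrho^n)^{-1}(U_1)$ to obtain a neighbourhood of $0$ in $X$, which by definition of the projective limit topology contains some $(\varrho^{m})^{-1}(V)$ with $m\ge m_0$ and $V\in\mathscr{U}_0(X_m)$. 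The ($\Omega$)-inclusion $(\varrho^{m})^{-1}(V)\subseteq \varepsilon (\varrho^n)^{-1}(U_1) + \frac{C}{\varepsilon^s}W_X$ for a given third neighbourhood $W_X = (\varrho^k)^{-1}(W)$ is then pushed forward by $\varrho^n$: using $\varrho^n(W_X)\subseteq \varrho^n_k(W)$ and that $\varrho^n$ maps the left side onto (a subset of) $\varrho^n_m(V)$ modulo the kernel issue, one reads off $\varrho^n_m(V)\subseteq \varepsilon U + \frac{C}{\varepsilon^s}\varrho^n_k(W)$, after absorbing the $U_1$-error into $U$. Again the subtlety is that $\varrho^n$ is not injective, so ``pushing forward'' an inclusion of preimages needs the surjectivity-type statement from Theorem \ref{M-L}$(iii)$ to close the loop — this is the main obstacle in both directions.

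The genuinely delicate point, which I expect to be where the real work lies, is reconciling the step-wise quantifier pattern ``$\forall n,U\,\exists m,V\,\forall k,W\,\exists C,s$'' with the single-space pattern ``$\forall U_X\,\exists V_X\,\forall W_X\,\exists C,s$''. The translation is not purely formal because a neighbourhood basis of $\operatorname{Proj}\mathscr{X}$ is given by preimages $(\varrho^n)^{-1}(U)$, and a given $\varrho^n_m(V)$ need not literally equal $\varrho^n((\varrho^m)^{-1}(V))$ — there can be a gap measured by how far $\varrho^n_m(X_m)$ sits inside $\varrho^n(\operatorname{Proj}\mathscr{X})$. The vanishing of $\operatorname{Proj}^1\mathscr{X}$ is precisely what controls that gap (Theorem \ref{M-L}), which is why both conditions $\operatorname{Proj}^1\mathscr{X}=0$ and ($\Omega$) appear together on the left-hand side of the equivalence and why \eqref{Omega-step-form} cannot be expected to be equivalent to ($\Omega$) for $\operatorname{Proj}\mathscr{X}$ alone. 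I would organize the proof so that the Mittag-Leffler step is invoked once at the start of each direction to ``flatten'' the spectrum, after which the ($\Omega$)-estimates transfer by a routine (if notation-heavy) chase through the neighbourhood bases.
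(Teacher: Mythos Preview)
Your strategy has the right ingredients and correctly isolates the main difficulty, namely the gap between $\varrho^n_m(V)$ and $\varrho^n\bigl((\varrho^m)^{-1}(V)\bigr)$. However, the \emph{level} at which you invoke Mittag-Leffler is wrong in both directions, and this is not merely bookkeeping.

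In the ``only if'' direction you apply Theorem~\ref{M-L}$(iii)$ at level $n$ first, obtaining $\varrho^n_{m_0}(X_{m_0})\subseteq\varrho^n(X)+U_1$, and only afterwards apply $(\Omega)$ to produce $(\varrho^m)^{-1}(V)$ with $m\ge m_0$. For $x\in V$ this yields $\varrho^n_m(x)=\varrho^n(y)+u_1$ with $y\in X$ and $u_1$ small in $X_n$, but you have \emph{no information about $\varrho^m(y)$}; there is no reason $y\in(\varrho^m)^{-1}(V)$, so the $(\Omega)$-inclusion for $(\varrho^m)^{-1}(V)$ says nothing about $y$. Scaling the Mittag-Leffler inclusion shrinks $u_1$ but does not cure this. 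The paper reverses the order: apply $(\Omega)$ first to obtain an intermediate level $\widetilde m$ and $\widetilde V\in\mathscr{U}_0(X_{\widetilde m})$, \emph{then} invoke Mittag-Leffler at level $\widetilde m$ with the neighbourhood $\widetilde V\cap(\varrho^n_{\widetilde m})^{-1}(U)$ to get $m\ge\widetilde m$, and set $V=\tfrac12(\varrho^{\widetilde m}_m)^{-1}(\widetilde V)$. Scaling by $\varepsilon/2$ then writes $\varrho^{\widetilde m}_m(x)=\varrho^{\widetilde m}(y_\varepsilon)+z_\varepsilon$ with $z_\varepsilon$ small simultaneously in $\widetilde V$ (forcing $y_\varepsilon\in(\varrho^{\widetilde m})^{-1}(\widetilde V)$, so $(\Omega)$ now applies to $y_\varepsilon$) and in $(\varrho^n_{\widetilde m})^{-1}(U)$ (so the residual error lands in $\varepsilon U$).

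The same mislocation appears in the ``if'' direction: the approximation ``$\varrho^n_k(W)\subseteq\varrho^n(\operatorname{Proj}\mathscr{X})+(\text{small})$'' lifts only at level $n$ and therefore gives no bound on the lift at level $k$, so you cannot conclude the lift lies in $\tfrac{C}{\varepsilon^s}(\varrho^k)^{-1}(W)$. The paper instead passes to a further index $\widetilde k\ge k$, applies Mittag-Leffler at level $k$ with the neighbourhood $W\cap(\varrho^n_k)^{-1}(U)$, and then uses \eqref{Omega-step-form} with $\widetilde k$ and $\widetilde W=(\varrho^k_{\widetilde k})^{-1}(W)$ in place of $k,W$. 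The common pattern is that Mittag-Leffler must be invoked at the level where the lift back to $\operatorname{Proj}\mathscr{X}$ is actually needed (level $\widetilde m$, respectively $k$), with a neighbourhood that is small for \emph{both} relevant constraints at once---not at the outer level $n$.
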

\begin{proof} By definition of the projective limit topology, ($\Omega$) for $X = \operatorname{Proj} \mathscr{X}$ means that
	\begin{eqnarray}\label{Omegaa}
		\begin{gathered}
			\forall n \in \N, U \in \mathscr{U}_0(X_n) \, \exists m \geq n, V \in \mathscr{U}_0(X_m) \, \forall k \geq m, W \in \mathscr{U}_0(X_k) \\ \exists C,s > 0 \, \forall \varepsilon \in (0,1)\, :\,
			(\varrho^m)^{-1}(V) \subseteq \varepsilon  (\varrho^n)^{-1}(U) + \frac{C}{\varepsilon^s} (\varrho^k)^{-1}(W). 
		\end{gathered}
	\end{eqnarray}
	Now suppose that  \eqref{Omega-step-form}  holds. This condition clearly implies condition $(ii)$ from Theorem \ref{M-L} and thus that $\operatorname{Proj}^1 \mathscr{X}  =0$. We now show \eqref{Omegaa}. Let $n \in \N$ and $U \in \mathscr{U}_0(X_n)$ be arbitrary and choose $m \geq n$  and $V \in \mathscr{U}_0(X_m)$ according to \eqref{Omega-step-form}. Let $k \geq m$ and $W \in \mathscr{U}_0(X_k)$ be arbitrary. By condition $(iii)$ from Theorem \ref{M-L}, there is $\widetilde{k} \geq k$ such that
	\begin{equation}
		\label{Proj1-cons0}
		\varrho^{k}_{\widetilde{k}}(X_{\widetilde{k}}) \subseteq \varrho^{k}(X) + (W \cap (\varrho^n_k)^{-1}(U) ).
	\end{equation}
	Set $\widetilde{W} = (\varrho^{k}_{\widetilde{k}})^{-1}(W)$. Condition \eqref{Omega-step-form} (applied to $\widetilde{k}$ and $\widetilde{W}$ instead of $k$ and $W$)  yields that there are $C,s > 0$ such that for all $\varepsilon \in (0,1)$
	\begin{equation}\label{Omega-proj-20}
		\varrho^n_m(V) \subseteq \varepsilon U + \frac{C}{\varepsilon^s} \varrho^n_{\widetilde{k}}(\widetilde{W}). 
	\end{equation}
	Let $x \in  (\varrho^m)^{-1}(V)$ and $\varepsilon \in (0,1)$ be arbitrary.  Note that $\varrho^n(x) = \varrho^n_m(\varrho^m(x)) \in \varrho^n_m(V)$. Condition \eqref{Omega-proj-20} implies that there is $\widetilde{y}_\varepsilon \in \frac{2^sC}{\varepsilon^s} \widetilde{W}$ such that $\varrho^n(x) - \varrho^n_{\widetilde{k}}(\widetilde{y}_\varepsilon) \in \frac{\varepsilon}{2}U$. By multiplying both sides of \eqref{Proj1-cons0} with $\varepsilon/2$, we  find that there is $y_\varepsilon \in X$ such that $\varrho^k_{\widetilde{k}}(\widetilde{y}_\varepsilon) - \varrho^k(y_\varepsilon) \in \frac{\varepsilon}{2}(W \cap (\varrho^n_k)^{-1}(U)) \subseteq \frac{1}{\varepsilon^s}W \cap \frac{\varepsilon}{2}(\varrho^n_k)^{-1}(U)$. We have that
	\begin{eqnarray*}
		\varrho^n(x-y_\varepsilon) &=& (\varrho^n(x) - \varrho^n_{\widetilde{k}}(\widetilde{y}_\varepsilon)) + (\varrho^n_{\widetilde{k}}(\widetilde{y}_\varepsilon) - \varrho^n(y_\varepsilon)) \\
		&=&  (\varrho^n(x) - \varrho^n_{\widetilde{k}}(\widetilde{y}_\varepsilon)) + \varrho^n_{k}(\varrho^k_{\widetilde{k}}(\widetilde{y}_\varepsilon) - \varrho^k(y_\varepsilon)) \in \frac{\varepsilon}{2} U + \frac{\varepsilon}{2} U = \varepsilon U
	\end{eqnarray*}
	and
	\[\varrho^k(y_\varepsilon) = (\varrho^k(y_\varepsilon)-\varrho^k_{\widetilde{k}}(\widetilde{y}_\varepsilon)) + \varrho^k_{\widetilde{k}}(\widetilde{y}_\varepsilon) \in \frac{1}{\varepsilon^s}W +  \frac{2^sC}{\varepsilon^s}W = \frac{2^sC+1}{\varepsilon^s}W.\]
	Thus,
	\[x = (x-y_\varepsilon) + y_\varepsilon \in \varepsilon  (\varrho^n)^{-1}(U) + \frac{2^sC+1}{\varepsilon^s} (\varrho^k)^{-1}(W).\]
	Next, assume that  $X$ satisfies ($\Omega$) (and thus  \eqref{Omegaa}) and $\operatorname{Proj}^1 \mathscr{X} = 0$. Let $n \in \N$ and $U \in \mathscr{U}_0(X_n)$ be arbitrary.  Choose $\widetilde{m} \geq n$ and $\widetilde{V} \in \mathscr{U}_0(X_{\widetilde{m}})$ according to \eqref{Omegaa}. Since  $\operatorname{Proj}^1 \mathscr{X} = 0$, Theorem \ref{M-L} implies that there is $m \geq \widetilde{m}$ such that
	\begin{equation}
		\label{Proj1-cons}
		\varrho^{\widetilde{m}}_m(X_m) \subseteq \varrho^{\widetilde{m}}(X) + (\widetilde{V} \cap (\varrho^n_{\widetilde{m}})^{-1}(U)).
	\end{equation}
	Set $V = \frac{1}{2} (\varrho^{\widetilde{m}}_m)^{-1}(\widetilde{V})$. Let  $k \geq m$ and $W \in \mathscr{U}_0(X_k)$ be arbitrary. Condition \eqref{Omegaa} implies that there are $C,s > 0$ such that for all $\varepsilon \in (0,1)$
	\begin{equation}
		\label{Omega-proj-2}
		(\varrho^{\widetilde{m}})^{-1}(\widetilde{V}) \subseteq \varepsilon  (\varrho^n)^{-1}(U) + \frac{C}{\varepsilon^s} (\varrho^k)^{-1}(W). 
	\end{equation}
	Let $x \in V$ and $ \varepsilon \in (0,1)$ be arbitrary. By multiplying both sides of \eqref{Proj1-cons} with $\varepsilon/2$, we  find that there are $y_\varepsilon \in X$ and $z_\varepsilon \in \frac{\varepsilon}{2}(\widetilde{V} \cap (\varrho^n_{\widetilde{m}})^{-1}(U)) \subseteq \frac{1}{2}\widetilde{V} \cap \frac{\varepsilon}{2}(\varrho^n_{\widetilde{m}})^{-1}(U)$ such that 
	$\varrho^{\widetilde{m}}_m(x) = \varrho^{\widetilde{m}}(y_\varepsilon) + z_\varepsilon$. Note that $y_\varepsilon \in  (\varrho^{\widetilde{m}})^{-1}(\widetilde{V})$. Hence, by \eqref{Omega-proj-2},
	\[\varrho^{n}_m(x) = \varrho^{n}(y_\varepsilon) +\varrho^{n}_{\widetilde{m}}(z_\varepsilon) \in \frac{\varepsilon}{2}U + \frac{2^sC}{\varepsilon^s} \varrho^n((\varrho^k)^{-1}(W))  + \frac{\varepsilon}{2}U \subseteq \varepsilon U + \frac{2^sC}{\varepsilon^s} \varrho^n_k(W).\]
\end{proof}

\subsection{The condition (P$\Omega$) for (PLS)-spaces}
A locally convex space $X$ is called a (PLS)-space if it can be written as the topological projective limit of a spectrum of (DFS)-spaces. 

Let  $\mathscr{X} = (X_n, \varrho^n_{n+1})_{n \in \N}$ be a spectrum of (DFS)-spaces. We call $\mathscr{X}$ \emph{strongly reduced} if
\[\forall n \in \N \, \exists m \geq n \, : \, \varrho^n_m(X_m) \subseteq \overline{\varrho^n( \operatorname{Proj} \mathscr{X} )}^{X_n}.\] 
The spectrum $\mathscr{X}$ is said to satisfy (P$\Omega$) if
\begin{eqnarray*}
	\begin{gathered}
		\forall n \in \N \, \exists m \geq n \, \forall k \geq m \, \exists B \in \mathscr{B}(X_n) \, \forall M \in \mathscr{B}(X_m) \, \exists K \in \mathscr{B}(X_k) \, \exists C,s > 0 \\ \forall \varepsilon \in (0,1) \, : \, 
		\varrho^n_m(M) \subseteq \varepsilon B + \frac{C}{\varepsilon^s} \varrho^n_k(K).
	\end{gathered}
\end{eqnarray*}

A (PLS)-space $X$ is said to satisfy (P$\Omega$) if  $X = \operatorname{Proj}\mathscr{X}$ for some strongly reduced spectrum $\mathscr{X}$ of (DFS)-spaces that satisfies (P$\Omega$). This notion is well-defined as \cite[Proposition 3.3.8]{Wengenroth} yields that all strongly reduced projective spectra $\mathscr{X}$ of (DFS)-spaces with $X = \operatorname{Proj}\mathscr{X}$ are equivalent (in the sense of  \cite[Definition 3.1.6]{Wengenroth}). The bipolar theorem and \cite[Lemma 4.5]{B-D2006} imply that the above definition of (P$\Omega$) is equivalent to the original one from \cite{B-D2006}.
\subsection{Convolution operators} 
Let  $\mu \in \mathscr{E}'(\R^d)$. For all open sets $X_1, X_2 \subseteq \R^d$  such that \eqref{def-sets} holds the convolution operators
\begin{equation}
	S_\mu  : \mathscr{D}'(X_1) \rightarrow \mathscr{D}'(X_2), \, f \mapsto \mu \ast f 
	\label{map-1}
\end{equation}
and
\begin{equation}
	S_\mu : \mathscr{E}(X_1) \rightarrow \mathscr{E}(X_2)
	\label{map-2}
\end{equation}
are well-defined continuous linear maps. H\"ormander characterized when the maps \eqref{map-1} and \eqref{map-2} are surjective \cite[Section 16.5]{HoermanderPDO2}. We need some preparation to state his results. As customary, we define the Fourier transform of an element $\mu \in \mathscr{E}'(\R^d)$ as
\[\widehat{\mu}(\zeta) = \langle \mu(x), e^{-i x \cdot \zeta} \rangle, \qquad \zeta \in \C^d.\]
Then, $\widehat{\mu}$ is an entire function such that
\[|\widehat{\mu}(\zeta)| \leq C(1+|\zeta|)^N e^{H_{\mu}(\operatorname{Im} \zeta)}, \qquad \zeta \in \C^d,\]
for some $C,N > 0$, where $H_\mu$ denotes the supporting function of $\supp \mu$.  A distribution $\mu \in \mathscr{E}'(\R^d)$ is called \emph{invertible} \cite[Definition 16.3.12]{HoermanderPDO2} (see also \cite{Ehrenpreis}) if 
\[\exists c,R,M > 0 \, \forall \xi \in \R^d \, \exists \zeta \in \C^d, |\zeta - \xi| < R \log(1+|\xi|) \, : \,  |\widehat{\mu}(\zeta)| \geq c(1+|\xi|)^{-M}.\]
We refer to \cite[Theorem 16.3.10]{HoermanderPDO2} for various characterizations of invertibility. 

Let  $\mu \in \mathscr{E}'(\R^d)$ and $X_1, X_2 \subseteq \R^d$ be open sets such that \eqref{def-sets} holds. We set  $\check{\mu}(x) =  \mu(-x)$. Note that $S_{\check{\mu}} = S^t_{\mu} :  \mathscr{E}'(X_2) \rightarrow \mathscr{E}'(X_1)$. H\"ormander \cite[Theorem 16.5.7 and Corollary 16.5.19]{HoermanderPDO2} showed that $S_\mu : \mathscr{E}(X_1) \rightarrow \mathscr{E}(X_2)$ is surjective if and only if $\mu$ is invertible and the pair $(X_1,X_2)$ is $\mu$-convex for supports,  i.e.,  for every compact $K_1  \subseteq X_1$ there is a compact $K_2 \subseteq X_2$ such that for all $f \in \mathscr{E}'(X_2)$
\[\supp \check{\mu} \ast f \subseteq K_1 \Longrightarrow \supp f \subseteq K_2,\]
while $S_\mu : \mathscr{D}'(X_1) \rightarrow \mathscr{D}'(X_2)$ is surjective if and only if the pair $(X_1,X_2)$ is $\mu$-convex for supports and singular supports, where the latter means that  for every compact $K_1 \subseteq X_1$ there is a compact $K_2 \subseteq X_2$ such that for all $f \in \mathscr{E}'(X_2)$
\[\singsupp \check{\mu} \ast f \subseteq K_1 \Longrightarrow \singsupp f \subseteq K_2.\]
Furthermore, if $(X_1,X_2)$ is $\mu$-convex for singular supports, then $\mu$ is invertible \cite[Proposition 16.5.12]{HoermanderPDO2}. Hence, $S_\mu : \mathscr{E}(X_1) \rightarrow \mathscr{E}(X_2)$ is surjective if $S_\mu : \mathscr{D}'(X_1) \rightarrow \mathscr{D}'(X_2)$ is so.
By  \cite[Theorem 4.3.3]{HoermanderPDO1} and  \cite[Corollary 16.3.15]{HoermanderPDO2},  $\mu$ is invertible if and only if  $S_\mu : \mathscr{E}(\R^d) \rightarrow \mathscr{E}(\R^d)$ is surjective if and only if  $S_\mu : \mathscr{D}'(\R^d) \rightarrow \mathscr{D}'(\R^d)$ is surjective. This result was first shown by Ehrenpreis \cite[Theorem I]{Ehrenpreis}. We will use the above facts without explicitly referring to them.

Of course, every  differential operator $P(D)$, where $P \in \C[\xi_1, \ldots,  \xi_d]$ and $D = (-i\frac{\partial}{\partial x_1},\ldots,-i\frac{\partial}{\partial x_d})$, may be seen as a convolution operator with kernel $\mu = P(D) \delta$.

\section{Equivalence of ($\Omega$) and (P$\Omega$) for kernels of convolution operators}\label{sect-main}
The goal of this section is to show Theorem \ref{main-intro}. Fix   $\mu \in \mathscr{E}'(\R^d)$ and two open sets $X_1, X_2 \subseteq \R^d$ such that \eqref{def-sets} holds. We endow the spaces 
\[
\mathscr{E}_\mu(X_1,X_2)  = \{ f \in \mathscr{E}(X_1) \, | \, \mu \ast f =  0 \mbox{ in }   \mathscr{E}(X_2)  \}
\]
and 
\[
\mathscr{D}'_\mu(X_1,X_2) = \{ f \in \mathscr{D}'(X_1) \, | \, \mu \ast f =  0 \mbox{ in }   \mathscr{D}'(X_2) \}
\]
with the relative topology induced by  $\mathscr{E}(X_1)$ and  $\mathscr{D}'(X_1)$, respectively. Since both these spaces are closed, $\mathscr{E}_\mu (X_1,X_2)$ is a Fr\'echet space and $\mathscr{D}'_\mu (X_1,X_2)$ is a (PLS)-space.  

Let $(X_{1,n})_{n \in \N}$ and $(X_{2,n})_{n \in \N}$ be exhaustions by relatively compact open subsets of $X_1$ and $X_2$ such that  $X_{2,n} - \operatorname{supp} \mu \subseteq X_{1,n}$ for all $n \in \N$. Set $K_{j,n} = \overline{X_{j,n}}$ for $j = 1,2$ and note that $K_{2,n} - \operatorname{supp} \mu \subseteq K_{1,n}$ for all $n \in \N$. We define\footnote{Let $Y \subseteq \R^d$ be  relatively compact and open. $\mathscr{E}(\overline{Y})$ is the Fr\'echet space of smooth functions $f \in \mathscr{E}(Y)$ such that  $f^{(\alpha)}$ can be continuously extended to $\overline{Y}$ for all $\alpha \in \N^d$. $\mathscr{D}(\overline{Y})$ is the Fr\'echet space of smooth functionswith support in $\overline{Y}$ and $\mathscr{D}'(\overline{Y})$ is the strong dual of $\mathscr{D}(\overline{Y})$.}
\[
\mathscr{E}_\mu(K_{1,n} ,K_{2,n}) = \{  f \in \mathscr{E}(K_{1,n}) \, | \, \mu \ast f  = 0 \mbox{ in } \mathscr{E} (K_{2,n})  \}
\]
and
\[
\mathscr{D}'_\mu(K_{1,n} ,K_{2,n}) = \{  f \in \mathscr{D}'(K_{1,n}) \, | \, \mu \ast f  = 0 \mbox{ in } \mathscr{D}'(K_{2,n})  \}.
\]
Then, $(\mathscr{E}_\mu(K_{1,n} ,K_{2,n}) )_{n \in \N}$ is a projective spectrum of Fr\'echet spaces  such that 
\[
\mathscr{E}_\mu (X_1,X_2) = \operatorname{Proj} ( \mathscr{E}_\mu(K_{1,n} ,K_{2,n}) )_{n \in \N}
\] 
and $( \mathscr{D}'_\mu(K_{1,n} ,K_{2,n}))_{n \in \N}$ is a projective spectrum of (DFS)-spaces  such that 
\[
\mathscr{D}'_\mu (X_1,X_2) = \operatorname{Proj} ( \mathscr{D}'_\mu(K_{1,n} ,K_{2,n}))_{n \in \N}.
\]
In both cases we tacitly assumed that the spectral maps are the restriction maps. In the sequel we shall not write these maps.

For $n, N \in \N$ we define
\[
\| f \|_{n, N} = \max_{x \in  K_{1,n}, |\alpha| \leq N } |f^{(\alpha)}(x)|, \qquad f \in \mathscr{E}(K_{1,n}),
\]
and
\[
\| f \|^*_{n, N} = \sup \{ | \langle f, \varphi \rangle | \, | \, \varphi \in \mathscr{D}(K_{1,n}), \, \| \varphi \|_{n, N} \leq 1 \},  \qquad f \in \mathscr{D}'(K_{1,n}).
\]
We set
\[
U_{n,N} =  \{ f \in  \mathscr{E}_\mu(K_{1,n} ,K_{2,n}) \, | \, \| f \|_{n, N} \leq 1 \}, \qquad U_n = U_{n,n},
\]
and
\[
B_{n,N} = \{ f \in  \mathscr{D}'_\mu(K_{1,n} ,K_{2,n}) \, | \, \| f \|^*_{n, N} \leq 1 \}.
\]
Then, $(\frac{1}{N+1}U_{n,N})_{N \in \N}$ is a decreasing fundamental sequence of absolutely convex  neighborhoods of $0$ in $\mathscr{E}_\mu(K_{1,n} ,K_{2,n})$  and $(NB_{n,N})_{N \in \N}$ is an increasing fundamental sequence of absolutely convex bounded sets in $\mathscr{D}'_\mu(K_{1,n} ,K_{2,n})$. 
We need the following lemma.

\begin{lemma} \label{surj-proj} \mbox{}
	\begin{itemize}
		\item[$(i)$] The map  $S_\mu : \mathscr{E}(X_1) \rightarrow \mathscr{E}(X_2)$ is  surjective if and only if $\mu$ is invertible and 
		\[
		\operatorname{Proj}^1 ( \mathscr{E}_\mu(K_{1,n} ,K_{2,n}) )_{n \in \N} = 0.
		\]
		\item[$(ii)$] The map $S_\mu : \mathscr{D}'(X_1) \rightarrow \mathscr{D}'(X_2)$ is  surjective if and only if  $\mu$ is invertible and 
		\[
		\operatorname{Proj}^1 ( \mathscr{D}'_\mu(K_{1,n} ,K_{2,n}) )_{n \in \N} = 0.
		\] 
	\end{itemize}
\end{lemma}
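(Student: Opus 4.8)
The plan is to exploit the two abstract surjectivity criteria established in the preliminaries --- Proposition \ref{proj1surj} together with the short exact sequences it produces, and the description of $\operatorname{Proj}^1$ for spectra of Fréchet spaces (Theorem \ref{M-L}) and of (DFS)-spaces (Theorem \ref{M-L-DFS}) --- applied to a morphism whose kernel spectrum is precisely $(\mathscr{E}_\mu(K_{1,n},K_{2,n}))_n$, respectively $(\mathscr{D}'_\mu(K_{1,n},K_{2,n}))_n$. The natural choice is the morphism of spectra induced by $S_\mu$ from $(\mathscr{E}(K_{1,n}))_n$ to $(\mathscr{E}(K_{2,n}))_n$ (and likewise for $\mathscr{D}'$). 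Since $\mathscr{E}(X_1)=\operatorname{Proj}(\mathscr{E}(K_{1,n}))_n$ with all spectral maps having dense range, Theorem \ref{M-L} gives $\operatorname{Proj}^1(\mathscr{E}(K_{1,n}))_n=0$; hence the last clause of Proposition \ref{proj1surj} applies and the surjectivity of $\operatorname{Proj} S_\mu$ on the level of projective limits is \emph{equivalent} to $\operatorname{Proj}^1(\ker S_\mu)=\operatorname{Proj}^1(\mathscr{E}_\mu(K_{1,n},K_{2,n}))_n=0$, \emph{provided} the hypothesis of that proposition --- for every $n$ there is $m\ge n$ with $S_\mu(\mathscr{E}(K_{2,m}))\subseteq$ (image of $S_\mu$ at level $n$), read off appropriately --- is verified.

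Concretely I would proceed in three steps. First, identify $\operatorname{Proj} S_\mu$ with the convolution map $S_\mu:\mathscr{E}(X_1)\to\mathscr{E}(X_2)$: this is immediate from $\mathscr{E}(X_j)=\operatorname{Proj}(\mathscr{E}(K_{j,n}))_n$ and the compatibility $K_{2,n}-\supp\mu\subseteq K_{1,n}$, and the kernel spectrum is $(\mathscr{E}_\mu(K_{1,n},K_{2,n}))_n$ by definition. Second, verify the ``local surjectivity'' hypothesis needed to invoke Proposition \ref{proj1surj}: for each $n$ one must find $m\ge n$ such that every $g\in\mathscr{E}(K_{2,m})$, restricted to $X_{2,n}$, lies in $S_\mu(\mathscr{E}(K_{1,n}))$. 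This is exactly where invertibility of $\mu$ enters --- invertibility is equivalent to surjectivity of $S_\mu$ on all of $\mathscr{E}(\R^d)$ (and $\mathscr{D}'(\R^d)$), so one can solve $\mu\ast f=g$ near $K_{2,n}$ after extending $g$ suitably, then cut down to a relatively compact neighbourhood; the support condition between the $K_{j,n}$'s makes the fitting work. Third, assemble: if $S_\mu$ is surjective on $\mathscr{E}(X_1)\to\mathscr{E}(X_2)$ then in particular $\mu$ is invertible (surjectivity forces $\mu$ invertible, as recorded in the preliminaries via $\mu$-convexity), the hypothesis of Proposition \ref{proj1surj} holds, $\operatorname{Proj}^1$ of the ambient spectrum vanishes, and the exact sequence forces $\operatorname{Proj}^1(\mathscr{E}_\mu(K_{1,n},K_{2,n}))_n=0$; conversely, if $\mu$ is invertible and this $\operatorname{Proj}^1$ vanishes, then $\operatorname{Proj} S_\mu$ is surjective, i.e. $S_\mu:\mathscr{E}(X_1)\to\mathscr{E}(X_2)$ is surjective. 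Part $(ii)$ runs identically with $\mathscr{D}'$ in place of $\mathscr{E}$, using that $(\mathscr{D}'(K_{1,n}))_n$ is a spectrum of (DFS)-spaces with $\operatorname{Proj}^1=0$ (its projective limit is the (PLS)-space $\mathscr{D}'(X_1)$, and this spectrum is strongly reduced, so Theorem \ref{M-L-DFS} or the standard surjectivity of restriction maps gives the vanishing), and invertibility again being equivalent to surjectivity of $S_\mu$ on $\mathscr{D}'(\R^d)$.

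The main obstacle is the second step: checking the hypothesis ``$\forall n\ \exists m\ge n:\ \sigma^n_m(Y_m)\subseteq S_n(X_n)$'' of Proposition \ref{proj1surj} in a way that only uses invertibility of $\mu$ and not the full (stronger) $\mu$-convexity of the pair $(X_1,X_2)$. The point is that this hypothesis is a purely \emph{local} solvability statement --- solve the convolution equation on a fixed relatively compact piece --- and for that, global surjectivity on $\R^d$ (equivalently, invertibility) together with a careful choice of the index $m$ and a truncation argument is enough; no global $\mu$-convexity is needed here, which is precisely why invertibility is the only extra hypothesis in the statement. One has to be mildly careful that the ``extend $g$, solve on $\R^d$, restrict'' procedure lands in $\mathscr{E}(K_{1,n})$ (respectively $\mathscr{D}'(K_{1,n})$) and genuinely satisfies $\mu\ast f=g$ on $\mathscr{E}(K_{2,n})$, which is where the inclusions $X_{2,n}-\supp\mu\subseteq X_{1,n}\subseteq\overline{X_{1,n}}=K_{1,n}$ and the freedom to enlarge $m$ are used. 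Everything else is bookkeeping with the exact sequence of Proposition \ref{proj1surj} and the already-quoted facts that surjectivity of either convolution map forces $\mu$ to be invertible.
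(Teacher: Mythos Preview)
Your proposal is correct and follows essentially the same route as the paper: set up $S_\mu$ as a morphism between the spectra $(\mathscr{E}(K_{1,n}))_n$ and $(\mathscr{E}(K_{2,n}))_n$ (respectively the $\mathscr{D}'$-spectra), verify the local-solvability hypothesis of Proposition~\ref{proj1surj} using invertibility of $\mu$ (the paper does this explicitly via a fundamental solution $E$ with $\mu\ast E=\delta$ and a cut-off, which is your ``extend, solve on $\R^d$, restrict'' step), and then invoke the exact sequence together with $\operatorname{Proj}^1(\mathscr{E}(K_{1,n}))_n=0$ (cut-off and regularization, i.e.\ condition $(iii)$ of Theorem~\ref{M-L}) and $\operatorname{Proj}^1(\mathscr{D}'(K_{1,n}))_n=0$ (surjectivity of restriction maps). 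The only slip is notational---in your first paraphrase of the hypothesis you wrote $S_\mu(\mathscr{E}(K_{2,m}))$ where you meant the restriction $\sigma^n_m(\mathscr{E}(K_{2,m}))$---but you state it correctly two sentences later.
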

\begin{proof}
	$(i)$  We start by showing that for invertible $\mu$ it holds that
	\begin{equation}
		\forall n \in \N \, \forall f \in \mathscr{E}(K_{2,n+1}) \, \exists g \in \mathscr{E}(\R^d) \, : \, \mu \ast g = f \mbox{ on } K_{2,n}.
		\label{invertible-con}
	\end{equation}
	Since $\mu$ is invertible,  there is $E \in\mathscr{D}'(\R^d)$ such that $\mu \ast E = \delta$. Choose $\psi \in \mathscr{D}(K_{2,n+1})$ such that $\psi = 1$ on $K_{2,n}$. Set $g = E \ast f\psi \in \mathscr{E}(\R^d)$ and note that $\mu \ast g = f\psi = f$ on $K_{2,n}$. The sufficiency of the condition for surjectivity is therefore a direct consequence of Proposition  \ref{proj1surj}. For its necessity we note that $\mu$ is invertible because $S_\mu : \mathscr{E}(X_1) \rightarrow \mathscr{E}(X_2)$ is  surjective, and thus that \eqref{invertible-con} holds. Furthermore,
	a standard cut-off and regularization argument shows that $(\mathscr{E}(K_{1,n}) )_{n \in \N}$ satisfies condition $(iii)$ from Theorem \ref{M-L}, whence $\operatorname{Proj}^1(\mathscr{E}(K_{1,n}) )_{n \in \N} =0$. Proposition  \ref{proj1surj} now implies that also $\operatorname{Proj}^1 ( \mathscr{E}_\mu(K_{1,n} ,K_{2,n}) )_{n \in \N} = 0$.
	
	$(ii)$ The proof is similar to the one of $(i)$ and is therefore omitted. We only remark that $\operatorname{Proj}^1(\mathscr{D}'(K_{1,n}) )_{n \in \N} =0$ since the restriction maps $\mathscr{D}'(K_{1,n+1}) \rightarrow \mathscr{D}'(K_{1,n})$ are surjective. 
\end{proof} 

Fix  $\chi \in \mathscr{D}(\R^d)$ with $\chi \geq 0$, $\operatorname{supp} \chi \subseteq B(0,1)$  and $\int_{\R^d} \chi(x)\dx =1$, and set $\chi_\varepsilon(x) = \varepsilon^{-d} \chi(x/\varepsilon)$ for $\varepsilon > 0$. We are ready to show  Theorem \ref{main-intro}.
\begin{proof}[Proof of Theorem \ref{main-intro}] $(i)$  By Lemma \ref{Omega-step} and a  rescaling argument, it suffices to show that
	\begin{eqnarray}\label{STS-omega}
		\begin{gathered}
			\forall n \in \N \, \exists m \geq n \, \forall k \geq m \, \exists C_1, C_2,s, \varepsilon_0 > 0 \, \forall \varepsilon \in (0, \varepsilon_0) \,:\\ 
			U_m \subseteq \varepsilon C_1  U_n + \frac{C_2}{\varepsilon^s} U_k.
		\end{gathered}
	\end{eqnarray}
	We start by showing that the spectrum $(\mathscr{D}'_\mu(K_{1,n} ,K_{2,n}) )_{n \in \N}$ is strongly reduced. We need to show that for all $n \in \N$ there is $m \geq n$ such that for all $U \in \mathscr{U}_0(\mathscr{D}'_\mu(K_{1,n} ,K_{2,n}))$ it holds
	\[ \mathscr{D}'_\mu(K_{1,m} ,K_{2,m})\subseteq \mathscr{D}'_\mu(X_{1} ,X_{2})+ U.\]
	As $S_\mu : \mathscr{E}(X_1) \rightarrow \mathscr{E}(X_2)$ is surjective, $\operatorname{Proj}^1 (\mathscr{E}_\mu(K_{1,n},K_{2,n} ))_{n \in \N} = 0$ by Lemma \ref{surj-proj}$(i)$. Let $n \in \N$ be arbitrary. Theorem \ref{M-L} yields that  there is  $\widetilde{m} \geq  n$ such that 
	\begin{equation}
		\mathscr{E}_\mu(K_{1,\widetilde{m} } ,K_{2,\widetilde{m}}) \subseteq \mathscr{E}_\mu(X_{1},X_2)+ U_{n,0}.\
		\label{M-L-reducedness}
	\end{equation}
	Set $m = \widetilde{m} + 1$ and let $U \in \mathscr{U}_0(\mathscr{D}'_\mu(K_{1,n} ,K_{2,n}))$ be arbitrary. Let $f \in \mathscr{D}'_\mu(K_{1,m} ,K_{2,m})$ be arbitrary. There is $\varepsilon > 0$  such that $f\ast \chi_\varepsilon \in \mathscr{E}_\mu(K_{1,\widetilde{m} } ,K_{2,\widetilde{m}})$ and $f - f\ast \chi_\varepsilon \in \frac{1}{2} U$. Next, choose  $\delta > 0$ such that $\delta U_{n,0} \subseteq \frac{1}{2} U$. By multiplying both sides of \eqref{M-L-reducedness} with $\delta$, we  obtain that
	\[
	f \ast \chi_\varepsilon \in  \mathscr{E}_\mu(K_{1,\widetilde{m} } ,K_{2,\widetilde{m}})  \subseteq \mathscr{E}_\mu(X_{1},X_2)+ \delta U_{n,0}\subseteq  \mathscr{D}'_\mu(X_{1} ,X_{2})   + \frac{1}{2} U.
	\]
	Hence,
	\[
	f = f \ast \chi_\varepsilon +  (f - f\ast \chi_\varepsilon) \in  \mathscr{D}'_\mu(X_{1} ,X_{2})  + \frac{1}{2} U + \frac{1}{2} U  =  \mathscr{D}'_\mu(X_{1} ,X_{2}) + U.
	\]
	This shows that the spectrum $(\mathscr{D}'_\mu(K_{1,n} ,K_{2,n}) )_{n \in \N}$ is strongly reduced, whence it satisfies (P$\Omega$).
	We now show \eqref{STS-omega}. Let $n \in \N$ be arbitrary. Choose $m \geq n+1$ according to  (P$\Omega$) and let $k \geq m$ be arbitrary. Condition (P$\Omega$) (applied to $k + 1$ and with $M = N$) and a rescaling argument yield that
	\[
	\begin{gathered}
		\exists N \in \N, \, K \geq N, \, C, r > 0 \, \forall \delta  \in (0,1) \, : |K_{1,m}| B_{m,N} \subseteq \delta B_{n+1,N} +  \frac{C}{\delta^{r}}B_{k+1,K},
	\end{gathered}
	\]
	where $|K_{1,m}|$ denotes the Lebesgue measure of $K_{1,m}$.
	Let $f \in U_m$ be arbitrary. Since $U_m \subseteq  |K_{1,m}|B_{m,N}$, there is $f_\delta \in \delta^{-r}CB_{k+1,K}$ with $f -f_\delta \in \delta B_{n+1,N}$  for all $\delta \in (0,1)$. Set $\varepsilon_0 = \min \{ 1, d(K_{1,k}, K^c_{1,k+1}),  d(K_{1,n}, K^c_{1,n+1}) \}$. For $\delta \in (0,1)$  and $\varepsilon \in (0, \varepsilon_0)$ we define $f_{\delta, \varepsilon} =  f_\delta \ast \chi_\varepsilon \in \mathscr{E}_\mu(K_{1,k},K_{2,k})$. The mean value theorem implies that
	\[
	f - f \ast \chi_\varepsilon \in  \varepsilon \sqrt{d}   U_n.
	\]
	Since $f -f_\delta \in \delta B_{n+1,N}$, we have that
	\[
	f \ast \chi_\varepsilon - f_{\delta, \varepsilon} = (f - f_\delta) \ast \chi_\varepsilon \in  \|\chi\|_{\infty, N + n} 
	\frac{\delta}{\varepsilon^{N+n+d}}  U_n,
	\]
	where $\|\chi\|_{\infty,j}=\max_{x\in\R^d,|\alpha|\leq j}|\chi^{(\alpha)}(x)|$, $j\in\N$. Similarly, as $f_\delta \in \delta^{-r}CB_{k+1,K}$, it holds that
	\[
	f_{\delta, \varepsilon}  \in  C\|\chi\|_{\infty, K + k}   \frac{1}{\delta^r \varepsilon^{K+k+d}}U_k.
	\]
	Define $f_\varepsilon = f_{\varepsilon^{N+n+d+1}, \varepsilon} \in \mathscr{E}_\mu(K_{1,k},K_{2,k})$. Set $C_1 =  \sqrt{d} +  \|\chi\|_{\infty, N + n}$,  $C_2 = C  \|\chi\|_{\infty, K + k}$ and $s = r(N+n+d+1) + (K + k +d)$. The above estimates yield that
	\[
	f =  (f - f \ast \chi_\varepsilon) + (f \ast \chi_\varepsilon - f_{\varepsilon}) + f_\varepsilon \in \varepsilon C_1  U_n + \frac{C_2}{\varepsilon^s} U_k.
	\]
	
	$(ii)$  Lemma \ref{surj-proj}$(ii)$  yields that $\operatorname{Proj}^1 (\mathscr{D}'_\mu(K_{1,n},K_{2,n}) )_{n \in \N} = 0$. By \cite[Theorem 3.2.9]{Wengenroth}, we obtain that $(\mathscr{D}'_\mu(K_{1,n},K_{2,n}) )_{n \in \N}$ is strongly reduced. Hence, it suffices to prove that $(\mathscr{D}'_\mu(K_{1,n},K_{2,n}) )_{n \in \N}$ satisfies (P$\Omega$). By a  rescaling argument, it is enough to show that 
	\begin{eqnarray}\label{STS-Pomega}
		\begin{gathered}
			\forall n \in \N \, \exists m \geq n \, \forall k \geq m \, \exists N \in \N \, \forall M \geq N \, \exists K \geq N, C_1,C_2,s, \varepsilon_0 > 0 \\ \forall  \varepsilon \in (0, \varepsilon_0) \, : 
			\, 
			B_{m,M} \subseteq  \varepsilon C_1  B_{n,N} + \frac{C_2}{\varepsilon^s} B_{k,K}.
		\end{gathered}	
	\end{eqnarray}
	$S_\mu : \mathscr{E}(X_1) \rightarrow \mathscr{E}(X_2)$ is surjective because $S_\mu : \mathscr{D}'(X_1) \rightarrow \mathscr{D}'(X_2)$ is so. Hence, Lemma \ref{surj-proj}$(i)$ implies that $\operatorname{Proj}^1 (\mathscr{E}_\mu(K_{1,n},K_{2,n} ))_{n \in \N} = 0$. Therefore, by Lemma  \ref{Omega-step}, $(\mathscr{E}_\mu(K_{1,n},K_{2,n} ))_{n \in \N} $  satisfies  \eqref{Omega-step-form}. As we have that $\operatorname{Proj}^1 (\mathscr{D}'_\mu(K_{1,n},K_{2,n}) )_{n \in \N} = 0$, Theorem \ref{M-L-DFS} yields that
	\begin{eqnarray}\label{proj12}
		\begin{gathered}
			\forall n \in \N \, \exists m \geq n \, \forall k \geq m \, \exists N \in \N \, \forall M \geq N\, \exists K \geq N, C > 0\,  :  \\
			B_{m,M} \subseteq  B_{n,N} +  CB_{k,K}.
		\end{gathered}
	\end{eqnarray}
	We now show  \eqref{STS-Pomega}. Let $n \in \N$ be arbitrary. Choose $m_1 \geq n$ according to  \eqref{Omega-step-form} and $m_2 \geq n$ according to \eqref{proj12}. Set $m = \max\{m_1, m_2\}  +1$. Let $k \geq m$ be arbitrary. Condition  \eqref{Omega-step-form} implies that there are  $C,r >0$ such that for all $\delta \in (0,1)$
	\begin{equation}
		\label{Omega1}
		U_{m_1} \subseteq \delta U_{n,0} + \frac{C}{\delta^r} U_{k,0}.
	\end{equation}
	Choose $N \in \N$ according \eqref{proj12}. Let $M \geq N$ be arbitrary. Condition \eqref{proj12} (applied to $M+1$ instead of $M$) implies that there are $K \geq M+1$ and $C' > 0$ such that
	\begin{equation}
		\label{proj13}
		B_{m_2,M+1} \subseteq  B_{n,N} +  C'B_{k,K}.
	\end{equation}
	Set $\varepsilon_0 = \min \{ 1, d(K_{1,m-1}, K^c_{1,m}) \}$. Let $f \in  B_{m,M}$ be arbitrary. For $\varepsilon \in (0, \varepsilon_0)$ we define $f_\varepsilon = f \ast \chi_\varepsilon \in \mathscr{E}_\mu(K_{1,m-1}, K_{2,m-1})$.  The mean value theorem and \eqref{proj13} imply that
	\[
	f- f_\varepsilon \in \varepsilon \sqrt{d} B_{m_2,M+1} \subseteq \varepsilon  \sqrt{d}B_{n,N} + \sqrt{d}C' B_{k,K}.
	\]
	We  have that
	\[
	f_\varepsilon \in \| \chi\|_{\infty, M+m_1}\frac{1}{\varepsilon^{M+m_1+d}} U_{m_1}. 
	\]
	Set $s =r(M+m_1+d+1) + M+m_1+d$. By \eqref{Omega1} (applied to $\delta = \varepsilon^{M+m_1+d+1}$), we obtain that
	\[
	\frac{1}{\| \chi\|_{\infty, M+m_1}} f_\varepsilon \in \frac{1}{\varepsilon^{M+m_1+d}} U_{m_1} \subseteq \varepsilon U_{n,0} +  \frac{C}{\varepsilon^s} U_{k,0} \subseteq   \varepsilon |K_{1,n}| B_{n,N} +  \frac{C}{\varepsilon^s} |K_{1,k}| B_{k,K}.
	\]
	Set $C_1 = \sqrt{d} +|K_{1,n}| \| \chi\|_{\infty, M+m_1}$ and $C_2 = \sqrt{d}C' +|K_{1,k}| \| \chi\|_{\infty, M+m_1}C$.  The above inclusions yield that
	\[
	f \in C_1\varepsilon  B_{n,N} +\frac{C_2}{\varepsilon^s} B_{k,K}.
	\]
\end{proof}
Recall from the introduction that  for a  differential operator $P(D)$ and an open set $X \subseteq \R^d$ we simply write
\[
\mathscr{E}_P(X)  = \{ f \in \mathscr{E}(X) \, | \, P(D)f = 0 \}
\]
and
\[
\mathscr{D}'_P(X)  = \{ f \in \mathscr{D}'(X) \, | \, P(D)f = 0 \}.
\]

\begin{remark} Theorem \ref{main-intro} with ($\Omega$) replaced by $(\overline{\overline{\Omega}})$ \cite{Vogt1983}  and  (P$\Omega$) replaced by $(P\overline{\overline{\Omega}})$ \cite{B-D2006}  does not hold. In fact, Vogt \cite[Theorem 14]{Vogt2006} showed that for any open convex set $X \subseteq \R^d$, $d >1$, and any $P \in \C[\xi_1, \ldots, \xi_d]$, the space $\mathscr{E}_P(X)$ does not satisfy $(\overline{\overline{\Omega}})$. On the contrary, as $\mathscr{D}'(X)$ satisfies $(P\overline{\overline{\Omega}})$ \cite[Corollary 6.1]{B-D2006} and this condition is inherited by complemented subspaces, $\mathscr{D}'_P(X)$ satisfies $(P\overline{\overline{\Omega}})$ for any open set  $X \subseteq \R^d$ and any $P(D)$ such that $P(D): \mathscr{D}'(X) \rightarrow \mathscr{D}'(X)$ admits a continuous linear right inverse \cite{MTV}. 
\end{remark}

\section{The condition ($\Omega$) for smooth kernels of convolution and differential operators}\label{sect-examples}
\subsection{The augmented operator} Let $X_1, X_2 \subseteq \R^d$ be open and let $T: \mathscr{D}'(X_1) \rightarrow \mathscr{D}'(X_2)$ be a surjective continuous linear map.  Bonet and Doma\'nski \cite[Proposition 8.3]{B-D2006} showed that $\ker T$ satisfies $(P \Omega)$ if and only if
\[
T \otimes \operatorname{id}_{\mathscr{D}'(\R)} :  \mathscr{D}'(X_1 \times \R) \rightarrow \mathscr{D}'(X_2 \times \R)
\]
is surjective\footnote{In fact, they  only showed this statement for $X_1= X_2$. Since $\mathscr{D}'(X_1) \cong (s')^{\N}\cong \mathscr{D}'(X_2) $ for any two open sets $X_1, X_2 \subseteq \R^d$, the above more general result is a consequence  of the particular case $X_1= X_2$. Alternatively, the proof of \cite[Proposition 8.3]{B-D2006} can  be readily adapted. }.  

Let  $\mu \in \mathscr{E}'(\R^d)$ and $X_1, X_2 \subseteq \R^d$ be open sets such that \eqref{def-sets} holds.  Suppose that $S_\mu: \mathscr{D}'(X_1) \rightarrow \mathscr{D}'(X_2)$ is surjective. By the above result for $T =S_\mu $, we find that $\mathscr{D}'_\mu(X_1,X_2)$ satisfies $(P \Omega)$ if and only if 
\[
S_\mu  \otimes \operatorname{id}_{\mathscr{D}'(\R)} = S_{\mu \otimes \delta} : \mathscr{D}'(X_1 \times \R) \rightarrow \mathscr{D}'(X_2 \times \R)
\]
is surjective. We call $ S_{\mu \otimes \delta}$ the \emph{augmented operator} of $S_\mu$. In particular, for a  differential operator $P(D)$ and an open set $X \subseteq \R^d$, the augmented operator of $P(D): \mathscr{D}'(X) \rightarrow \mathscr{D}'(X)$ is given by the  differential operator $P^+(D): \mathscr{D}'(X \times \R) \rightarrow \mathscr{D}'(X \times \R)$, where $P^+(\xi_1, \ldots, \xi_{d+1}) = P(\xi_1, \ldots, \xi_d)$ (cf.\ the introduction). 

Theorem \ref{main-intro} enables us to also relate the condition ($\Omega$) for the space of smooth zero solutions of a convolution operator to the surjectivity of its augmented operator -- as explained in the introduction, before this was only known for hypoelliptic differential operators. More precisely, the following result holds.
\begin{theorem}\label{main-1}
	Let  $\mu \in \mathscr{E}'(\R^d)$ and $X_1, X_2 \subseteq \R^d$ be open sets such that \eqref{def-sets} holds. Suppose that $S_\mu: \mathscr{D}'(X_1) \rightarrow \mathscr{D}'(X_2)$ is surjective. The following statements are equivalent:
	\begin{itemize}
		\item[$(i)$] $\mathscr{E}_\mu(X_1,X_2)$ satisfies ($\Omega$).
		\item[$(ii)$] $\mathscr{D}'_\mu(X_1,X_2)$ satisfies (P$\Omega$).
		\item[$(iii)$] $S_{\mu \otimes \delta}: \mathscr{D}'(X_1 \times \R) \rightarrow \mathscr{D}'(X_2 \times \R)$ is surjective.
		\item[$(iv)$] $(X_1 \times \R,X_2 \times \R)$ is $\mu \otimes \delta$-convex for singular supports.
	\end{itemize}
\end{theorem}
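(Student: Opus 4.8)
The plan is to assemble the equivalence as a cycle of implications, drawing on the three ingredients already in place: Theorem \ref{main-intro}, the discussion of the augmented operator at the start of Section \ref{sect-examples}, and H\"ormander's characterization of surjectivity of convolution operators on $\mathscr{D}'$ recalled in the preliminaries. The equivalence $(ii) \Leftrightarrow (iii)$ is immediate from the augmented-operator discussion: since $S_\mu : \mathscr{D}'(X_1) \to \mathscr{D}'(X_2)$ is assumed surjective, the Bonet--Doma\'nski result (with $T = S_\mu$) says precisely that $\mathscr{D}'_\mu(X_1,X_2)$ satisfies $(\mathrm{P}\Omega)$ iff $S_\mu \otimes \operatorname{id}_{\mathscr{D}'(\R)} = S_{\mu \otimes \delta} : \mathscr{D}'(X_1 \times \R) \to \mathscr{D}'(X_2 \times \R)$ is surjective. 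The equivalence $(iii) \Leftrightarrow (iv)$ follows from H\"ormander's theorem applied to the kernel $\mu \otimes \delta \in \mathscr{E}'(\R^{d+1})$: surjectivity of $S_{\mu \otimes \delta}$ on $\mathscr{D}'(X_1 \times \R)$ holds iff $(X_1 \times \R, X_2 \times \R)$ is $\mu \otimes \delta$-convex for supports \emph{and} for singular supports; so I must check that the ``convex for supports'' half is automatic here, which it should be because it follows from surjectivity of $S_\mu$ on $\mathscr{D}'(X_1)$ together with the product structure (the support condition in the extra variable is trivially satisfied since $\operatorname{supp}(\mu \otimes \delta) = \operatorname{supp}\mu \times \{0\}$).

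For the remaining link to $(i)$, I would use Theorem \ref{main-intro}. Direction $(ii) \Rightarrow (i)$: by part $(i)$ of Theorem \ref{main-intro}, if $\mathscr{D}'_\mu(X_1,X_2)$ satisfies $(\mathrm{P}\Omega)$ then $\mathscr{E}_\mu(X_1,X_2)$ satisfies $(\Omega)$ — but this requires $S_\mu : \mathscr{E}(X_1) \to \mathscr{E}(X_2)$ to be surjective, which holds because $S_\mu : \mathscr{D}'(X_1) \to \mathscr{D}'(X_2)$ is surjective (as recalled in the preliminaries, $\mu$-convexity for singular supports implies invertibility of $\mu$ and hence surjectivity on $\mathscr{E}$, and $\mu$-convexity for supports is inherited). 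Direction $(i) \Rightarrow (ii)$: by part $(ii)$ of Theorem \ref{main-intro}, if $\mathscr{E}_\mu(X_1,X_2)$ satisfies $(\Omega)$ then $\mathscr{D}'_\mu(X_1,X_2)$ satisfies $(\mathrm{P}\Omega)$, and this direction needs exactly the standing hypothesis that $S_\mu : \mathscr{D}'(X_1) \to \mathscr{D}'(X_2)$ is surjective. So both halves apply cleanly under the theorem's hypothesis, closing the cycle $(i) \Leftrightarrow (ii) \Leftrightarrow (iii) \Leftrightarrow (iv)$.

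The main obstacle, such as it is, is bookkeeping rather than a genuine difficulty: one must verify that all the ``auxiliary'' surjectivity and convexity conditions invoked along the way are indeed consequences of the single standing hypothesis (surjectivity of $S_\mu$ on $\mathscr{D}'(X_1)$). Concretely, the two points to nail down are (a) that $S_\mu$ surjective on $\mathscr{D}'(X_1)$ forces $S_\mu$ surjective on $\mathscr{E}(X_1)$, so that Theorem \ref{main-intro}$(i)$ is applicable — this is one of the facts stated in the convolution-operators subsection of the preliminaries — and (b) that $(X_1 \times \R, X_2 \times \R)$ is automatically $\mu \otimes \delta$-convex for supports, so that for the augmented kernel ``convex for singular supports'' is equivalent to ``$S_{\mu \otimes \delta}$ surjective on $\mathscr{D}'$'' with no extra hypothesis. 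Both are routine once the product structure of $\mu \otimes \delta$ and the reduction of $\mu \otimes \delta$-convexity in the product to $\mu$-convexity in the first factor are written out. With these in hand the proof is just the concatenation of the three equivalences above.
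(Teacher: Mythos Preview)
Your proposal is correct and follows essentially the same route as the paper's proof: the paper also derives $(i)\Leftrightarrow(ii)$ from Theorem~\ref{main-intro} (after first noting that surjectivity on $\mathscr{D}'$ implies surjectivity on $\mathscr{E}$), $(ii)\Leftrightarrow(iii)$ from the Bonet--Doma\'nski result, and $(iii)\Leftrightarrow(iv)$ from H\"ormander's characterization together with the observation that $\mu\otimes\delta$-convexity for supports of $(X_1\times\R,X_2\times\R)$ is automatic from $\mu$-convexity for supports of $(X_1,X_2)$ (for which the paper cites \cite[Proposition~1]{FrKa}). The bookkeeping points (a) and (b) you flag are exactly the two auxiliary facts the paper records.
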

\begin{proof}
	$S_\mu: \mathscr{E}(X_1) \rightarrow \mathscr{E}(X_2)$ is surjective as $S_\mu: \mathscr{D}'(X_1) \rightarrow \mathscr{D}'(X_2)$ is so.
	
	\noindent $(i) \Leftrightarrow (ii)$ This is shown in Theorem \ref{main-intro}. \\
	\noindent $(ii) \Leftrightarrow (iii)$ As explained above, this follows from  \cite[Proposition 8.3]{B-D2006}. \\
	\noindent $(iii) \Leftrightarrow (iv)$ $S_{\mu \otimes \delta}: \mathscr{D}'(X_1 \times \R) \rightarrow \mathscr{D}'(X_2 \times \R)$ is surjective if and only if  $(X_1 \times \R,X_2 \times \R)$ is $\mu \otimes \delta$-convex for supports and  singular supports. Hence, $(iii) \Rightarrow (iv)$ is trivial. For the converse direction, it suffices to note that $(X_1 \times \R,X_2 \times \R)$ is $\mu \otimes \delta$-convex for supports since $(X_1,X_2)$ is $\mu$-convex for supports (cf.\ \cite[Proposition 1]{FrKa}).
\end{proof}

In the next two subsections, we will use Theorem \ref{main-1} to show ($\Omega$)  for the space of smooth zero solutions of several types of convolution and differential operators.

\subsection{Differential operators}\label{PDO}
In \cite[Corollary 8.4]{B-D2006} it is shown that $\mathscr{D}'_P(X)$  satisfies (P$\Omega$) for any differential operator $P(D)$ and any  open convex set $X \subseteq \R^d$. In view of Theorem \ref{main-1}, this follows from the fact that  $P^+(D): \mathscr{D}'(X \times \R) \rightarrow \mathscr{D}'(X \times \R)$ is surjective (as  $X \times \R$ is convex).  Hence, Theorem \ref{main-1} yields the following counterpart of this result in the smooth setting. 
\begin{theorem}\label{theo: convex}
	Let $P\in\C[\xi_1,\ldots,\xi_d]$ and let $X \subseteq\R^d$ be open and convex. Then, $\mathscr{E}_P(X)$ satisfies ($\Omega$).
\end{theorem}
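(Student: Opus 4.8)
The plan is to derive Theorem \ref{theo: convex} directly from Theorem \ref{main-1}, so the real work has already been done; what remains is to verify the hypotheses of that theorem in the present situation. Recall that a differential operator $P(D)$ is the convolution operator $S_\mu$ with $\mu = P(D)\delta$, which is compactly supported (indeed $\supp \mu \subseteq \{0\}$), so all the constructions of Section \ref{sect-main} apply with $X_1 = X_2 = X$ and the containment \eqref{def-sets} is trivially satisfied. Thus $\mathscr{E}_\mu(X,X) = \mathscr{E}_P(X)$ and $\mathscr{D}'_\mu(X,X) = \mathscr{D}'_P(X)$, and Theorem \ref{main-1} tells us that $\mathscr{E}_P(X)$ satisfies ($\Omega$) as soon as we know that $P(D):\mathscr{D}'(X)\to\mathscr{D}'(X)$ is surjective and that one of the equivalent conditions $(ii)$--$(iv)$ holds.

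First I would invoke the classical result of H\"ormander that a differential operator with constant coefficients is surjective on $\mathscr{D}'(Y)$ for every open \emph{convex} $Y\subseteq\R^m$ (this is the distributional analogue of the Malgrange--Ehrenpreis situation; see \cite[Section 16.5]{HoermanderPDO2}, and it is also recalled in the introduction of the present paper). Applying this with $Y = X$ gives surjectivity of $P(D)$ on $\mathscr{D}'(X)$, which is the standing hypothesis of Theorem \ref{main-1}. Next, the augmented operator of $P(D)$ on $X$ is, as recorded in Subsection \ref{PDO}, the operator $P^+(D):\mathscr{D}'(X\times\R)\to\mathscr{D}'(X\times\R)$ with $P^+(\xi_1,\dots,\xi_{d+1}) = P(\xi_1,\dots,\xi_d)$. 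Since $X$ is convex, $X\times\R$ is convex in $\R^{d+1}$, so the same H\"ormander surjectivity result — applied now to $P^+(D)$ on the convex set $X\times\R$ — shows that condition $(iii)$ of Theorem \ref{main-1} holds. By the equivalence $(i)\Leftrightarrow(iii)$ in that theorem, $\mathscr{E}_P(X)$ satisfies ($\Omega$), which is the assertion.

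I do not anticipate a genuine obstacle here: the statement is essentially a corollary, and the substantive content lives in Theorem \ref{main-intro}/\ref{main-1}, whose proof (the regularization argument combined with Lemma \ref{Omega-step} and the Mittag-Leffler machinery) has already been carried out. The only point requiring a moment's care is that one must route through the \emph{distributional} side: it would be circular, or at least unsupported, to try to prove ($\Omega$) for $\mathscr{E}_P(X)$ by a direct argument in the smooth category for non-hypoelliptic $P$, since (as the paper notes) no analogue of the augmented-operator characterization is available purely in $\mathscr{E}$. So the proof should explicitly note that $\mathscr{D}'_P(X)$ satisfies (P$\Omega$) — equivalently, that $P^+(D)$ is surjective on $\mathscr{D}'(X\times\R)$ because $X\times\R$ is convex — and then transfer this to ($\Omega$) for $\mathscr{E}_P(X)$ via Theorem \ref{main-1}. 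If one wants to be completely explicit, the chain is: $X$ convex $\Rightarrow$ $X\times\R$ convex $\Rightarrow$ $P^+(D)$ surjective on $\mathscr{D}'(X\times\R)$ $\Rightarrow$ (Theorem \ref{main-1}, $(iii)\Rightarrow(i)$) $\mathscr{E}_P(X)$ satisfies ($\Omega$).
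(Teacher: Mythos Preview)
Your proposal is correct and follows essentially the same route as the paper: the paragraph preceding Theorem~\ref{theo: convex} already records that $P^+(D):\mathscr{D}'(X\times\R)\to\mathscr{D}'(X\times\R)$ is surjective because $X\times\R$ is convex, and then invokes Theorem~\ref{main-1} to pass from this (equivalently, from (P$\Omega$) for $\mathscr{D}'_P(X)$) to ($\Omega$) for $\mathscr{E}_P(X)$. Your write-up is slightly more explicit about checking the hypotheses (support of $\mu=P(D)\delta$, surjectivity on $\mathscr{D}'(X)$), but the argument is the same.
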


\begin{remark} $(i)$ Theorem \ref{theo: convex} has been shown by Petzsche in \cite[Corollary 4.5]{Petzsche1980} under the additional hypothesis that $P$ is hypoelliptic.  A careful inspection of his proof, which is based on the fundamental principle of  Ehrenpreis, actually shows that his method may be used to prove  Theorem \ref{theo: convex} in its full generality. 
	
	\noindent $(ii)$  We will extend Theorem \ref{theo: convex} to  difference-differential operators in Corollary \ref{diff-diff} below. 
\end{remark}

Next, we combine Theorem \ref{main-1} with several results of the second author \cite{Kalmes12-3,Kalmes19} about the surjectivity of  augmented operators to show ($\Omega$) for spaces of  smooth zero solutions of certain (non-hypoelliptic) differential operators. We say that a polynomial $P\in\C[\xi_1,\ldots,\xi_d]$  \emph{acts along a subspace} $W\subseteq\R^d$ if  $P(x)=P(\pi_Wx)$ for all  $x\in\R^d$, where $\pi_W$ denotes the orthogonal projection onto $W$. A polynomial $P$ which acts along a subspace $W$ is said to be \emph{elliptic on} $W$ if for its principal part $P_m$ it holds that $P_m(x)\neq 0$ for every $x\in W\backslash\{0\}$.

\begin{theorem}\label{theo-main-examples}
	Let $P\in\C[\xi_1,\ldots,\xi_d]$ and $X \subseteq \R^d$ be open such that $P(D):\mathscr{E}(X)\rightarrow\mathscr{E}(X)$ is surjective. Then, $\mathscr{E}_P(X)$ satisfies ($\Omega$) in the following cases:
	\begin{itemize}
		\item[$(i)$] $P$ acts along a subspace of $\R^d$ and is elliptic there.
		\item[$(ii)$] $P(D)$ is a product of first order operators, i.e., with  $\alpha\in\C\backslash\{0\}$, $c_j\in\C$ and $N_j\in\C^d\backslash\{0\}$, $j=1,\ldots,l$ it holds that $P(x)=\alpha\prod_{j=1}^l\left(N_j \cdot x-c_j\right)$.
		\item[$(iii)$] $d = 2$.
	\end{itemize}
\end{theorem}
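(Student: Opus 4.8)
Throughout, I would set $\mu = P(D)\delta \in \mathscr{E}'(\R^d)$, so that $\supp\mu = \{0\}$, condition \eqref{def-sets} holds with $X_1 = X_2 = X$, and $\mathscr{E}_P(X) = \mathscr{E}_\mu(X,X)$, $\mathscr{D}'_P(X) = \mathscr{D}'_\mu(X,X)$, the augmented operator of $P(D)$ being $P^+(D)$. The given hypothesis is that $P(D):\mathscr{E}(X)\to\mathscr{E}(X)$ is surjective, i.e.\ that $X$ is $P$-convex for supports. The plan is to route the proof through Theorem \ref{main-1}: its equivalence $(i)\Leftrightarrow(ii)$ reduces the claim to showing that $\mathscr{D}'_P(X)$ satisfies (P$\Omega$), and (P$\Omega$) for $\mathscr{D}'_P(X)$ is precisely what the second author established in \cite{Kalmes12-3,Kalmes19} in each of the three classes $(i)$--$(iii)$.

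The one substantive point is that Theorem \ref{main-1}, as well as the results of \cite{Kalmes12-3,Kalmes19}, are phrased under the a priori stronger hypothesis that $P(D)$ be surjective on $\mathscr{D}'(X)$, i.e.\ that $X$ be $P$-convex for supports \emph{and} for singular supports. So the first step is to upgrade the hypothesis: in each of the three cases, $P$-convexity of $X$ for supports already forces $P$-convexity of $X$ for singular supports. For $(iii)$ this is the coincidence of the two notions of $P$-convexity in the plane; for $(i)$ it follows from the geometric descriptions of these two notions for operators acting along a subspace on which they are elliptic, which agree --- and it is immediate when $P$ is hypoelliptic, since then $\singsupp\check\mu\ast\phi = \singsupp\phi$ for every $\phi\in\mathscr{E}'(\R^d)$, so that every open set is $P$-convex for singular supports; for $(ii)$ it follows from the analogous statement for products of first order operators. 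I would cite \cite{Kalmes11,Kalmes12-3,Kalmes19} for these facts. Granting the upgrade, $P(D):\mathscr{D}'(X)\to\mathscr{D}'(X)$ is surjective, \cite{Kalmes12-3,Kalmes19} give that $\mathscr{D}'_P(X)$ satisfies (P$\Omega$) (equivalently, that $P^+(D):\mathscr{D}'(X\times\R)\to\mathscr{D}'(X\times\R)$ is surjective), and Theorem \ref{main-1} then delivers ($\Omega$) for $\mathscr{E}_P(X)$.

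I expect the upgrade of the surjectivity hypothesis to be the only real obstacle; everything else is the machinery already assembled in Theorem \ref{main-1} (hence in Theorem \ref{main-intro}) together with the quoted (P$\Omega$) results. It is precisely the failure of the implication ``$P$-convex for supports $\Rightarrow$ $P$-convex for singular supports'' in general that explains why, for instance, semi-elliptic operators with a single characteristic direction are absent from the list, even though \cite{Kalmes12-3,Kalmes19} treat them under the $\mathscr{D}'$-surjectivity hypothesis. Should this coincidence not be available in the literature in exactly this form for one of the three classes, the fallback is to invoke Theorem \ref{main-intro}$(i)$ in place of Theorem \ref{main-1}: it requires only surjectivity of $P(D)$ on $\mathscr{E}(X)$ together with (P$\Omega$) for $\mathscr{D}'_P(X)$, and one then checks that the argument of \cite{Kalmes12-3,Kalmes19} proving (P$\Omega$) for $\mathscr{D}'_P(X)$ uses only the (weaker) geometric condition describing $P$-convexity for supports.
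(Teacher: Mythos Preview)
Your proposal is correct and follows essentially the same route as the paper: upgrade $\mathscr{E}$-surjectivity to $\mathscr{D}'$-surjectivity in each of the three cases via \cite{Kalmes11,Kalmes12-3,Kalmes19}, invoke the (P$\Omega$) results for $\mathscr{D}'_P(X)$ from those same references, and then apply the transfer theorem. The only cosmetic difference is that the paper cites Theorem \ref{main-intro} directly rather than Theorem \ref{main-1}; since the upgrade to $\mathscr{D}'$-surjectivity is available in all three cases, both routes coincide here, and your ``fallback'' via Theorem \ref{main-intro}$(i)$ is in fact the paper's primary phrasing.
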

\begin{proof}
	In view of Theorem \ref{main-intro}, this follows from the fact that in all three cases the surjectivity of $P(D):\mathscr{E}(X)\rightarrow\mathscr{E}(X)$ implies that $P(D):\mathscr{D}'(X)\rightarrow\mathscr{D}'(X)$ is surjective and  that $\mathscr{D}'_P(X)$ satisfies (P$\Omega$) \cite[Theorem 9 and Theorem 18(b)]{Kalmes19} for $(i)$; \cite[Corollary 10 and Theorem 18(c)]{Kalmes19} for $(ii)$;  \cite[Theorem 21]{Kalmes12-3} for $(iii)$.
\end{proof}

\begin{remark}
	In all three cases considered in Theorem \ref{theo-main-examples} a geometric characterization of the sets $X$ for which  $P(D):\mathscr{E}(X)\rightarrow\mathscr{E}(X)$ is surjective is known. A function $f:X\rightarrow [0,\infty]$ is said to satisfy the \emph{minimum principle} in a closed set $F\subseteq\R^d$ if for every compact subset $K$ of $X \cap F$ it holds that
	\[\inf_{x\in K}f(x)=\inf_{x\in\partial_F K}f(x),\]
	where $\partial_F K$ denotes the boundary of $K$ in $F$. Then,  $P(D):\mathscr{E}(X)\rightarrow\mathscr{E}(X)$ is surjective if and only if 
	\begin{itemize}
		\item[$(i)$] \emph{($P$ acts along a subspace $W$ of $\R^d$ and is elliptic there)} The boundary distance $d_X: X \to \R$ satisfies the minimum principle in the affine subspace $x+W$ for every $x\in\R^d$.
		\item[$(ii)$] \emph{($P(x)=\alpha\prod_{j=1}^l\left(N_j \cdot x-c_j\right)$ for some $\alpha\in\C\backslash\{0\}$, $c_j\in\C$ and $N_j\in\C^d\backslash\{0\}$, $j=1,\ldots,l$)} The boundary distance $d_X: X \to \R$  satisfies the minimum principle in the affine subspace $x+\mbox{span}\{\mbox{Re}N_j,\mbox{Im}N_j\}$ for every $x\in\R^d$ and  $j=1,\ldots,l$.
		\item[$(iii)$] \emph{($d = 2$)} The intersection of every characteristic line of $P$ with any connected component of $X$ is an interval.
	\end{itemize}
	$(i)$ and $(ii)$  are shown in \cite[Theorem 9 and Corollary 10]{Kalmes19}, while $(iii)$ is a classical result of H\"ormander \cite[Theorem 10.8.3]{HoermanderPDO2}.
\end{remark}

\subsection{Convolution operators - the convex case}\label{CO}
In \cite[Corollary 8.5]{B-D2006} it is shown that $\mathscr{D}'_\mu(\R^d,\R^d)$ satisfies (P$\Omega$) for any invertible $\mu \in \mathscr{E}'(\R^d)$. This follows from  Theorem \ref{main-1}: $\mu \otimes \delta$ is invertible because $\mu$ is so, whence $S_{\mu \otimes \delta}: \mathscr{D}'(\R^{d+1}) \rightarrow \mathscr{D}'(\R^{d+1})$ is surjective. Theorem \ref{main-1} therefore yields the following counterpart of this result in the smooth setting. 
\begin{theorem}\label{theo: convex-conv}
	Let $\mu \in \mathscr{E}'(\R^d)$ be invertible. Then, $\mathscr{E}_\mu(\R^d,\R^d)$ satisfies ($\Omega$).
\end{theorem}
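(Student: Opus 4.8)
The plan is to deduce the result directly from Theorem \ref{main-intro}$(i)$ together with the known fact \cite[Corollary 8.5]{B-D2006} that $\mathscr{D}'_\mu(\R^d,\R^d)$ satisfies (P$\Omega$) for invertible $\mu$. First I would observe that since $\mu \in \mathscr{E}'(\R^d)$ is invertible, $S_\mu : \mathscr{E}(\R^d) \rightarrow \mathscr{E}(\R^d)$ is surjective (this is the Ehrenpreis characterization of invertibility recalled in the Preliminaries). Thus the hypothesis of Theorem \ref{main-intro}$(i)$ is met with $X_1 = X_2 = \R^d$, and it remains only to verify that $\mathscr{D}'_\mu(\R^d,\R^d)$ satisfies (P$\Omega$); then Theorem \ref{main-intro}$(i)$ immediately gives that $\mathscr{E}_\mu(\R^d,\R^d)$ satisfies ($\Omega$).

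To see that $\mathscr{D}'_\mu(\R^d,\R^d)$ satisfies (P$\Omega$), I would invoke the augmented-operator characterization: by \cite[Proposition 8.3]{B-D2006}, since $S_\mu : \mathscr{D}'(\R^d) \rightarrow \mathscr{D}'(\R^d)$ is surjective (again by invertibility of $\mu$), the space $\mathscr{D}'_\mu(\R^d,\R^d) = \ker S_\mu$ satisfies (P$\Omega$) if and only if the augmented operator $S_{\mu \otimes \delta} : \mathscr{D}'(\R^{d+1}) \rightarrow \mathscr{D}'(\R^{d+1})$ is surjective. So the crux reduces to checking that $\mu \otimes \delta \in \mathscr{E}'(\R^{d+1})$ is invertible. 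This follows because $\widehat{\mu \otimes \delta}(\zeta_1, \ldots, \zeta_{d+1}) = \widehat{\mu}(\zeta_1, \ldots, \zeta_d)$, so the slowly-decreasing lower bound for $\widehat\mu$ near every real point (with the prescribed logarithmic-size shift in the first $d$ variables, and no shift needed in the last) transfers verbatim to $\widehat{\mu\otimes\delta}$. Hence $S_{\mu\otimes\delta}$ is surjective on $\mathscr{D}'(\R^{d+1})$, so $\mathscr{D}'_\mu(\R^d,\R^d)$ satisfies (P$\Omega$).

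Combining these two steps finishes the argument: the hypotheses of Theorem \ref{main-intro}$(i)$ hold, and that theorem yields ($\Omega$) for $\mathscr{E}_\mu(\R^d,\R^d)$. I do not expect any genuine obstacle here — the statement is essentially the smooth analogue of \cite[Corollary 8.5]{B-D2006}, packaged through Theorem \ref{main-intro}; the only mildly delicate point is the stability of invertibility under tensoring with $\delta$, which is a one-line Fourier-transform computation. (Alternatively, one could cite the fact recorded in the Preliminaries that $\mu$-convexity for supports of $(\R^d,\R^d)$ is automatic and that invertibility already guarantees surjectivity on all spaces in play, reaching the same conclusion via Theorem \ref{main-1}.)
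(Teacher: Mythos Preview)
Your proposal is correct and follows essentially the same route as the paper: the paper notes that $\mu\otimes\delta$ is invertible because $\mu$ is, so $S_{\mu\otimes\delta}:\mathscr{D}'(\R^{d+1})\to\mathscr{D}'(\R^{d+1})$ is surjective, and then invokes Theorem~\ref{main-1} (which packages Theorem~\ref{main-intro} together with \cite[Proposition~8.3]{B-D2006}) to conclude. The only cosmetic difference is that you appeal to Theorem~\ref{main-intro}$(i)$ directly and spell out the augmented-operator step separately, whereas the paper cites Theorem~\ref{main-1}; the content is identical.
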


In the rest of this section we extend \cite[Corollary 8.5]{B-D2006}  and Theorem \ref{theo: convex-conv} to arbitrary convex sets for a certain class of convolution operators. Let  $\mu \in \mathscr{E}'(\R^d)$.  For a convex open set $X \subseteq \R^d$ we set 
\[
X_\mu = \{ x \in \R^d \, | \, x - \supp \mu \subseteq X \}.
\]
Let $Y \subseteq \R^d$ be an open convex set such that $Y - \supp \mu \subseteq X$. By \cite[Example 16.5.5 and Proposition 16.5.6]{HoermanderPDO2}, the pair $(X, Y)$ is $\mu$-convex for supports if and only if $Y = X_\mu$. Hence,  if $\mu$ is invertible, $S_{\mu}: \mathscr{E}(X) \rightarrow \mathscr{E}(Y)$ is surjective if and only if $Y = X_\mu$. On the contrary, there exist invertible $\mu \in \mathscr{E}'(\R^d)$ and  open convex sets $X \subseteq \R^d$ such that $(X, X_\mu)$ is not $\mu$-convex for singular supports, or equivalently, that $S_{\mu}: \mathscr{D}'(X) \rightarrow \mathscr{D}'(X_\mu)$ is not surjective. For example, let $\mu$ be the area element of the boundary of an ellipsoid in $\R^d$ and let $X$ be an open half-space. Then, \cite[Theorem 16.3.20 and Proposition 16.5.14]{HoermanderPDO2} imply that  $(X, X_\mu)$ is not $\mu$-convex for singular supports. Consequently, the simple argument used to show Theorem \ref{theo: convex} cannot be extended to general convolution operators. However, we now show that this can be done for  convolution operators whose kernels are distributions of  class $\mathscr{R}$. This class of distribution kernels was introduced and  studied by Berenstein and Dostal \cite{Be-Do1973,Be-Do1973-1,Dostal}. A distribution $\mu \in \mathscr{E}'(\R^d)$ is said to be of class  $\mathscr{R}$ \cite[Definition 1]{Be-Do1973-1} if 
\[
\exists c,R,M > 0 \, \forall \zeta \in \C^d \, \exists z \in \C^d, |\zeta - z| < R \, : \,  |\widehat{\mu}(z)| \geq c(1+|\operatorname{Re} \zeta|)^{-M} e^{H_{\mu}(\operatorname{Im} \zeta) }.
\]
We recall that  $H_\mu$ denotes the supporting function of $ \supp \mu$. The following examples and properties of distributions of class $\mathscr{R}$ are taken from \cite{Be-Do1973-1}:

\begin{example}\label{examples-R} \mbox{}
	\begin{itemize}
		\item[$(i)$] Every distribution with finite support is of  class  $\mathscr{R}$.
		\item[$(ii)$] The characteristic function  of a compact polyhedron and the area element of its boundary are of  class  $\mathscr{R}$.
		\item[$(iii)$] Let $\mu, \nu \in \mathscr{E}'(\R^d)$. Then, $\mu \ast \nu$ is of class $\mathscr{R}$ if and only if $\mu, \nu$ are of  class  $\mathscr{R}$.
		\item[$(iv)$] The characteristic function of an ellipsoid and the area element of its boundary are not of class  $\mathscr{R}$.
	\end{itemize}
\end{example}

Our interest in distributions of class $\mathscr{R}$ stems from the following result.
\begin{proposition}\label{surj-R} \cite[Proposition 1 and Proposition 2]{Be-Do1973-1}
	Let $\mu \in \mathscr{E}'(\R^d)$ be of class  $\mathscr{R}$. For every  open convex set $X \subseteq \R^d$ it holds that $S_{\mu}: \mathscr{D}'(X) \rightarrow \mathscr{D}'(X_\mu)$ is surjective.
\end{proposition}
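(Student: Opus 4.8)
The plan is to verify H\"ormander's surjectivity criterion for $S_\mu : \mathscr{D}'(X) \to \mathscr{D}'(X_\mu)$ recalled in Section \ref{sect-prelim}: it suffices to show that the pair $(X, X_\mu)$ is $\mu$-convex both for supports and for singular supports. The supports part is already in hand. First, $\mu$ is invertible: inserting $\zeta = \xi \in \R^d$ in the defining estimate of class $\mathscr{R}$ yields, for each $\xi$, a point $z$ with $|z-\xi| < R$ and $|\widehat\mu(z)| \geq c(1+|\xi|)^{-M}$, which is stronger than the invertibility condition. Second, as noted just before the statement, $(X, X_\mu)$ is $\mu$-convex for supports by \cite[Example 16.5.5 and Proposition 16.5.6]{HoermanderPDO2}, since $X$ is convex and $X_\mu$ is the largest admissible target. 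Hence the entire content lies in $\mu$-convexity for singular supports.

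The key input is the \emph{singular} analogue of the Titchmarsh--Lions support theorem for distributions of class $\mathscr{R}$: if $\nu \in \mathscr{E}'(\R^d)$ is of class $\mathscr{R}$ (note that $\check\mu$ is of class $\mathscr{R}$ whenever $\mu$ is, directly from the definition, since the defining estimate is invariant under $\zeta \mapsto -\zeta$ and $H_{\check\mu}(\eta) = H_\mu(-\eta)$), then for every $g \in \mathscr{E}'(\R^d)$
\[
\operatorname{ch}(\singsupp(\nu \ast g)) = \operatorname{ch}(\supp\nu) + \operatorname{ch}(\singsupp g),
\]
where $\operatorname{ch}(\cdot)$ denotes the closed convex hull. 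The inclusion ``$\subseteq$'' is trivial, since $\singsupp(\nu \ast g) \subseteq \singsupp\nu + \singsupp g$ and $\singsupp\nu \subseteq \supp\nu$. The reverse inclusion ``$\supseteq$'' is the substance of \cite{Be-Do1973-1}: passing to the Fourier side one writes $\widehat g = \widehat{\nu \ast g}/\widehat\nu$, and the class $\mathscr{R}$ estimate — which provides, near \emph{every} $\zeta \in \C^d$, a point $z$ with $|\widehat\nu(z)| \geq c(1+|\operatorname{Re}\zeta|)^{-M}e^{H_\nu(\operatorname{Im}\zeta)}$, i.e.\ with no logarithmic loss and carrying the full exponential weight of $\supp\nu$ — allows carrying out this division while keeping polynomially-weighted exponential growth under control, via a minimum-modulus/Cauchy-estimate argument on the entire functions involved. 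The Paley--Wiener--Schwartz description of the convex hull of the singular support (see \cite[Chapter 7]{HoermanderPDO1}) then converts this into the claimed lower bound on $\operatorname{ch}(\singsupp g)$; in particular, taking $g = \delta$ gives $\operatorname{ch}(\singsupp\nu) = \operatorname{ch}(\supp\nu)$.

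Granting this, $\mu$-convexity for singular supports of $(X, X_\mu)$ follows by pure convex bookkeeping. Let $K_1 \subseteq X$ be compact; since $X$ is open and convex, $\operatorname{ch}(K_1)$ is a compact subset of $X$. Put
\[
K_2 = \{\, y \in \R^d \,:\, y - \operatorname{ch}(\supp\mu) \subseteq \operatorname{ch}(K_1)\,\} = \bigcap_{m \in \operatorname{ch}(\supp\mu)} \bigl(\operatorname{ch}(K_1) + m\bigr),
\]
which is compact and convex, depends only on $K_1$ and $\mu$, and satisfies $K_2 \subseteq X_\mu$: for $y \in K_2$ one has $y - \supp\mu \subseteq y - \operatorname{ch}(\supp\mu) \subseteq \operatorname{ch}(K_1) \subseteq X$. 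Now let $f \in \mathscr{E}'(X_\mu) \subseteq \mathscr{E}'(\R^d)$ satisfy $\singsupp(\check\mu \ast f) \subseteq K_1$. Applying the identity with $\nu = \check\mu$ and $g = f$, and using $\operatorname{ch}(\supp\check\mu) = -\operatorname{ch}(\supp\mu)$, we get
\[
\operatorname{ch}(\singsupp f) - \operatorname{ch}(\supp\mu) = \operatorname{ch}(\singsupp(\check\mu \ast f)) \subseteq \operatorname{ch}(K_1),
\]
so that $\operatorname{ch}(\singsupp f) \subseteq K_2$, hence $\singsupp f \subseteq K_2$. Thus $(X, X_\mu)$ is $\mu$-convex for singular supports, and by the criterion of Section \ref{sect-prelim} the map $S_\mu : \mathscr{D}'(X) \to \mathscr{D}'(X_\mu)$ is surjective.

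I expect the main obstacle to be the ``$\supseteq$'' half of the singular Titchmarsh--Lions identity, that is, the division-with-estimates step: it is precisely here that the specific strength of class $\mathscr{R}$ over mere invertibility enters, and it is the technical heart of \cite{Be-Do1973-1}; everything else reduces to H\"ormander's already-quoted results and elementary operations with convex sets.
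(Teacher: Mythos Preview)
The paper does not prove this proposition at all: it is stated as a quotation of \cite[Propositions~1 and~2]{Be-Do1973-1}, with no proof environment following. Your proposal is a correct outline of the Berenstein--Dostal argument --- reduce to H\"ormander's criterion, note that $\mu$-convexity for supports is already recorded in the paper, and deduce $\mu$-convexity for singular supports from the singular Titchmarsh--Lions identity $\operatorname{ch}(\singsupp(\nu\ast g)) = \operatorname{ch}(\supp\nu) + \operatorname{ch}(\singsupp g)$ for $\nu$ of class~$\mathscr{R}$, which is precisely the content of the cited propositions. Your convex bookkeeping in the last paragraph is clean and correct, and you are right that the entire weight rests on the ``$\supseteq$'' inclusion, whose proof (division with Paley--Wiener--Schwartz estimates exploiting the full exponential weight $e^{H_\mu(\operatorname{Im}\zeta)}$ in the class~$\mathscr{R}$ bound) you appropriately defer to \cite{Be-Do1973-1}.
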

Proposition \ref{surj-R} enables us to show the following result. 
\begin{theorem}\label{theo:conv-conv}
	Let $\mu \in \mathscr{E}'(\R^d)$ be of class  $\mathscr{R}$. For every  open convex set $X \subseteq \R^d$ it holds that  $\mathscr{D}'_\mu(X,X_\mu)$ satisfies (P$\Omega$) and  $\mathscr{E}_\mu(X,X_\mu)$ satisfies ($\Omega$).
\end{theorem}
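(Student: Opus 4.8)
The plan is to derive Theorem~\ref{theo:conv-conv} from Theorem~\ref{main-1}, applied with $X_1 = X$ and $X_2 = X_\mu$. Its standing hypothesis---surjectivity of $S_\mu : \mathscr{D}'(X) \to \mathscr{D}'(X_\mu)$---is exactly Proposition~\ref{surj-R}. It therefore remains to verify condition $(iii)$ of Theorem~\ref{main-1}, i.e.\ that the augmented operator $S_{\mu \otimes \delta} : \mathscr{D}'(X \times \R) \to \mathscr{D}'(X_\mu \times \R)$ is surjective; once this is done, Theorem~\ref{main-1} immediately gives both (P$\Omega$) for $\mathscr{D}'_\mu(X, X_\mu)$ and ($\Omega$) for $\mathscr{E}_\mu(X, X_\mu)$.

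The key point is that the class $\mathscr{R}$ is preserved under $\mu \mapsto \mu \otimes \delta$. I would prove this directly from the Fourier characterization: since $\supp(\mu \otimes \delta) = \supp \mu \times \{0\}$ we have $H_{\mu \otimes \delta}(\eta, \eta_{d+1}) = H_\mu(\eta)$ and $\widehat{\mu \otimes \delta}(\zeta, \zeta_{d+1}) = \widehat{\mu}(\zeta)$. Let $c, R, M > 0$ witness that $\mu$ is of class $\mathscr{R}$. Given $(\zeta, \zeta_{d+1}) \in \C^{d+1}$, choose $z \in \C^d$ with $|\zeta - z| < R$ and $|\widehat{\mu}(z)| \geq c(1 + |\operatorname{Re}\zeta|)^{-M} e^{H_\mu(\operatorname{Im}\zeta)}$, and put $z_{d+1} = \zeta_{d+1}$. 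Then $|(\zeta, \zeta_{d+1}) - (z, z_{d+1})| = |\zeta - z| < R$, and since $|\operatorname{Re}\zeta| \leq |\operatorname{Re}(\zeta, \zeta_{d+1})|$,
\[
|\widehat{\mu \otimes \delta}(z, z_{d+1})| = |\widehat{\mu}(z)| \geq c\,(1 + |\operatorname{Re}(\zeta, \zeta_{d+1})|)^{-M} e^{H_{\mu \otimes \delta}(\operatorname{Im}(\zeta, \zeta_{d+1}))}.
\]
Hence $\mu \otimes \delta$ is of class $\mathscr{R}$, with the same constants. (Note that Example~\ref{examples-R}$(iii)$ concerns convolution within a fixed $\R^d$ and does not apply to this tensor product, so this short computation is genuinely needed.)

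To conclude, I would note that $X \times \R$ is an open convex subset of $\R^{d+1}$ and that, directly from the definitions, $(X \times \R)_{\mu \otimes \delta} = \{(x,t) \in \R^{d+1} \mid x - \supp \mu \subseteq X\} = X_\mu \times \R$. Applying Proposition~\ref{surj-R} to the class-$\mathscr{R}$ distribution $\mu \otimes \delta$ and the convex set $X \times \R$ then yields that $S_{\mu \otimes \delta} : \mathscr{D}'(X \times \R) \to \mathscr{D}'(X_\mu \times \R)$ is surjective, which is condition $(iii)$ of Theorem~\ref{main-1}; the statement follows.

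I expect the only step requiring genuine work to be the stability of the class $\mathscr{R}$ under tensoring with $\delta$; as indicated above this is a short Fourier-analytic computation and should present no real difficulty, everything else being an assembly of Proposition~\ref{surj-R} and Theorem~\ref{main-1}. A more hands-on alternative would be to verify condition $(iv)$ of Theorem~\ref{main-1} directly, namely that $(X \times \R, X_\mu \times \R)$ is $\mu \otimes \delta$-convex for singular supports, but passing through Proposition~\ref{surj-R} avoids all support-theoretic bookkeeping.
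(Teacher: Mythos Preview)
Your proposal is correct and follows essentially the same route as the paper: both arguments reduce to showing that $\mu\otimes\delta$ is again of class $\mathscr{R}$ (via the identities $\widehat{\mu\otimes\delta}(\zeta,\zeta_{d+1})=\widehat{\mu}(\zeta)$ and $H_{\mu\otimes\delta}(\eta,\eta_{d+1})=H_\mu(\eta)$), then apply Proposition~\ref{surj-R} on the convex set $X\times\R$ to obtain condition $(iii)$ of Theorem~\ref{main-1}. Your write-up simply spells out the Fourier estimate and the identification $(X\times\R)_{\mu\otimes\delta}=X_\mu\times\R$ in more detail than the paper does.
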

\begin{proof}
	Note that $H_{\mu \otimes \delta}(\xi_1, \ldots, \xi_{d+1}) = H_{\mu}(\xi_1, \ldots, \xi_{d})$. Hence,  $\mu \otimes \delta$  is of class  $\mathscr{R}$ because $\mu$ is so. The result therefore follows from Theorem \ref{main-1} and Proposition \ref{surj-R}.
\end{proof}

As difference-differential operators are precisely the convolution operators whose kernels have finite support,  Example \ref{examples-R}$(i)$ and Theorem \ref{theo:conv-conv} yield the next result.

\begin{corollary}\label{diff-diff}
	Let $a_1, \ldots, a_l \in \R^d$ and $P_1, \ldots, P_l \in \C[\xi_1, \ldots, \xi_d]$. Let $X \subseteq \R^d$ be convex and set 
	$Y = \bigcap_{k =1}^l( a_k + X)$. Consider the difference-differential operator 
	\[
	S : \mathscr{D}'(X) \to  \mathscr{D}'(Y), \quad  S(f)(x) = \sum_{k = 1}^l P_k(D)f(x - a_k). 
	\]
	Then, $\ker (S: \mathscr{D}'(X) \to \mathscr{D}'(Y))$  satisfies (P$\Omega$) and  $\ker (S: \mathscr{E}(X) \to \mathscr{E}(Y))$  satisfies ($\Omega$).
\end{corollary}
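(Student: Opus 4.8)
The plan is to deduce Corollary \ref{diff-diff} directly from Theorem \ref{theo:conv-conv} by identifying the difference-differential operator $S$ as a convolution operator whose kernel is a distribution of class $\mathscr{R}$. First I would write $S = S_\mu$ with the kernel
\[
\mu = \sum_{k=1}^l P_k(D)\delta_{a_k} = \sum_{k=1}^l P_k(D)(\delta \ast \delta_{a_k}) \in \mathscr{E}'(\R^d),
\]
where $\delta_{a_k}$ is the Dirac measure at $a_k$; indeed $S_\mu f(x) = (\mu \ast f)(x) = \sum_{k=1}^l P_k(D)f(x-a_k)$, so $S = S_\mu$ as continuous linear maps. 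Note that $\supp\mu \subseteq \{a_1,\ldots,a_l\}$ is finite, so $\mu$ is of class $\mathscr{R}$ by Example \ref{examples-R}$(i)$.

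Next I would check that the target set $Y$ is exactly the set $X_\mu$ from Theorem \ref{theo:conv-conv}, at least up to the harmless issue that $-\supp\mu$ rather than $\supp\mu$ appears. Since $X_\mu = \{x \in \R^d \mid x - \supp\mu \subseteq X\}$ and $\supp\mu \subseteq \{a_1,\ldots,a_l\}$, one has $x \in X_\mu$ iff $x - a_k \in X$ for all $k$ with $a_k \in \supp\mu$, i.e. $x \in \bigcap_k (a_k + X) = Y$ (if some $P_k$ vanishes identically the corresponding $a_k$ drops out of $\supp\mu$, but then it also contributes nothing to $S$; by discarding such terms at the outset we may assume every $a_k \in \supp\mu$, or simply observe $Y \subseteq \bigcap_{a_k \in \supp\mu}(a_k+X)$ gives the nontrivial inclusion and the reverse direction is what matters for surjectivity). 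Since $X$ is convex and $Y$ is a finite intersection of translates of $X$, the set $Y$ is open and convex, so the hypotheses of Theorem \ref{theo:conv-conv} are met with this $\mu$ and this $X$, and $Y = X_\mu$.

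Finally, Theorem \ref{theo:conv-conv} applied to $\mu$ and $X$ gives that $\mathscr{D}'_\mu(X,X_\mu)$ satisfies (P$\Omega$) and $\mathscr{E}_\mu(X,X_\mu)$ satisfies ($\Omega$); translating back through $Y = X_\mu$ and $S = S_\mu$ yields that $\ker(S : \mathscr{D}'(X) \to \mathscr{D}'(Y))$ satisfies (P$\Omega$) and $\ker(S : \mathscr{E}(X) \to \mathscr{E}(Y))$ satisfies ($\Omega$). The only genuine point requiring care — and the one I would flag as the main (if minor) obstacle — is the bookkeeping around $\supp\mu$ versus the list $a_1,\ldots,a_l$: one must rule out that cancellations among the $P_k(D)\delta_{a_k}$ shrink $\supp\mu$ to something strictly smaller than expected, which would make $X_\mu$ strictly larger than $Y$. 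This is handled by noting that even in that case the identity $S = S_\mu$ still holds and $Y \subseteq X_\mu$, so $S_\mu : \mathscr{D}'(X) \to \mathscr{D}'(Y)$ is obtained by composing the surjection $S_\mu : \mathscr{D}'(X) \to \mathscr{D}'(X_\mu)$ with the (surjective) restriction $\mathscr{D}'(X_\mu) \to \mathscr{D}'(Y)$, and the kernel $\ker(S : \mathscr{D}'(X)\to\mathscr{D}'(Y))$ then coincides with $\mathscr{D}'_\mu(X,X_\mu)$ because a zero solution of $\mu \ast f = 0$ on $Y$ is automatically a zero solution on $X_\mu$ (both sides of the equation $\mu \ast f = 0$ being real-analytic-support statements determined on $Y$); consequently no loss occurs and the conclusion transfers verbatim. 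Alternatively, and more cleanly, one simply observes from the start that we may assume $P_k \not\equiv 0$ for all $k$ and the points $a_k$ are distinct, whence $\supp\mu$ contains each $a_k$ in whose neighborhood $P_k(D)\delta_{a_k}$ is the unique contribution, giving $X_\mu = Y$ outright.
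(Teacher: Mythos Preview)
Your approach is exactly that of the paper: write $S=S_\mu$ with $\mu=\sum_k P_k(D)\delta_{a_k}$, observe that $\mu$ has finite support and hence is of class $\mathscr{R}$ by Example~\ref{examples-R}$(i)$, and invoke Theorem~\ref{theo:conv-conv}. The paper's proof is in fact even terser and does not spell out the identification $Y=X_\mu$.

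One point to correct: your first workaround for the possibility $\supp\mu\subsetneq\{a_1,\ldots,a_l\}$ is not valid. The claim that ``a zero solution of $\mu\ast f=0$ on $Y$ is automatically a zero solution on $X_\mu$'' because of some real-analytic-support principle is unfounded; $\mu\ast f$ is merely a distribution on $X_\mu$ and its vanishing on the open subset $Y$ does not propagate to all of $X_\mu$. Thus in general $\ker(S_\mu:\mathscr{D}'(X)\to\mathscr{D}'(Y))$ can be strictly larger than $\mathscr{D}'_\mu(X,X_\mu)$. Your second workaround, however, is the right one and suffices: after discarding indices with $P_k\equiv 0$ and merging repeated points $a_k$, the distributions $P_k(D)\delta_{a_k}$ have pairwise disjoint supports, so $\supp\mu=\{a_1,\ldots,a_l\}$ exactly and $X_\mu=Y$. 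Simply drop the first workaround and keep the reduction argument.
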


We end this article by posing the following problem:
\begin{problem}\label{problem-1}
	Let  $\mu \in \mathscr{E}'(\R^d)$ be invertible  and $X \subseteq \R^d$ be open and convex. \\
	$(i)$  Suppose that $S_\mu : \mathscr{D}'(X) \rightarrow \mathscr{D}'(X_\mu)$ is surjective. Does $\mathscr{D}'_\mu(X,X_\mu)$ satisfy (P$\Omega$)?  \\
	$(ii)$ Does $\mathscr{E}_\mu(X,X_\mu)$ satisfy ($\Omega$)? Recall that  $S_\mu : \mathscr{E}(X) \rightarrow \mathscr{E}(X_\mu)$ is always surjective.
\end{problem}
\noindent By using the theory of plurisubharmonic functions,  H\"ormander  \cite[Proposition 16.5.14]{HoermanderPDO2} gave a geometrical characterization of $\mu$-convexity for singular supports for pairs of  open convex sets in terms of the family $\mathscr{H}(\mu)$  \cite[Definition 16.3.2]{HoermanderPDO2}. It would be interesting to investigate if this characterization could be used to tackle Problem \ref{problem-1}$(i)$. More precisely, one could try to show condition $(iv)$ from Theorem \ref{main-1} by verifying the condition in H\"ormander's result for $\mu \otimes \delta$. To this end, one would need a good description of  $\mathscr{H}(\mu \otimes \delta)$ in terms of  $\mathscr{H}(\mu)$. It is unclear to the authors how to obtain this.

Incidentally, for distributions $\mu$ with finite support  it holds that $\supp\mu=\singsupp\mu$ and $\mathscr{H}(\mu) = \{ H_\mu \}$ as well as $\mathscr{H}(\mu \otimes \delta) = \{ H_{\mu\otimes\delta} \}$ (see the remark after \cite[Corollary 16.3.18]{HoermanderPDO2}). Hence, $\mu$ is invertible   \cite[Theorem 16.3.10]{HoermanderPDO2}  and $(X,X_\mu)$ is $\mu$-convex for singular supports and $(X\times\R,X_\mu\times\R)$ is $\mu\otimes\delta$-convex for singular supports  \cite[Corollary 16.5.15]{HoermanderPDO2}. In view of Theorem \ref{main-1}, this gives an alternative proof of Corollary \ref{diff-diff} (recall that $(X,X_\mu)$ is always $\mu$-convex for supports). 


\end{document}